\newtheorem{theorem}{\bf Theorem}
\newtheorem{lemma}[theorem]{\bf Lemma}
\newtheorem{definition}{\bf Definition}
\date{}
\def\ps@pprintTitle{%
  \let\@oddhead\@empty
  \let\@evenhead\@empty
  \def\@oddfoot{\reset@font\hfil\thepage\hfil}
  \let\@evenfoot\@oddfoot
}
\begin{document}

\begin{frontmatter}

\title{Minimum size of insertion/deletion/substitution balls} 

\author[label1,label2]{Yuhang Pi}
\ead{piyuhang@amss.ac.cn}
\author[label1,label2]{Zhifang Zhang}
\ead{zfz@amss.ac.cn}

\address[label1]{Academy of Mathematics and Systems Science, 
Chinese Academy of Sciences, Beijing 100190, China}
\address[label2]{School of Mathematical Sciences, 
University of Chinese Academy of Sciences, Beijing 100049, China}

\begin{abstract}
Let $n,q,t,s,p$ be non-negative integers where $n\geq s$ and $q\geq 1$. 
For $\mathbf{x}\in A_{q}^{n}\triangleq\{ 0,1,\ldots,q-1 \}^{n}$, 
let the $t$-insertion $s$-deletion $p$-substitution ball of $\mathbf{x}$, 
denoted by $\mathcal{B}_{t,s,p}(\mathbf{x})$, be the set of sequences 
in $A_{q}^{n+t-s}$ which can be obtained from $\mathbf{x}$ 
by performing $t$ insertions, $s$ deletions, and at most $p$ substitutions. 
We establish that for any $\mathbf{x}\in A_{q}^{n}$, 
$|\mathcal{B}_{t,s,p}(\mathbf{x})|\geq\sum_{i=0}^{t+p}\binom{n+t-s}{i}(q-1)^{i}$, 
with equality holding if and only if $t=s=0\vee s=p=0\vee s+p\geq n\vee r(\mathbf{x})=1$. 
Here, $r(\mathbf{x})$ denotes the number of runs in $\mathbf{x}$, and 
a run in $\mathbf{x}$ is a maximum continuous subsequence of identical symbols.
\end{abstract}

\begin{keyword}
minimum size, insertion, deletion, substitution
\end{keyword}

\end{frontmatter}




\section{Introduction}\label{sec1}

The research of insertion/deletion/substitution balls contains $4$ fundamental questions: 
size, maximum size, minimum size, and average size. 
Considering general $t$-insertion $s$-deletion $p$-substitution balls 
$\mathcal{B}_{t,s,p}(\mathbf{x})$, as shown in Table \ref{tab1}, 
the determined results remain limited. 

\begin{table}[htbp]
\caption{For $\mathbf{x}\in A_{q}^{n}$, 
determined values related to $\mathcal{B}_{t,s,p}(\mathbf{x})$.}
\label{tab1}
\begin{center}
\begin{tabular}{|c|c|}
\hline
\diagbox[width=0.3cm,height=0.3cm]{}{} & 
\raisebox{0pt}[1em][0.6em]{\footnotesize Determined values for parameter $(t,s,p)_{q}$} 
\\
\hline
\multirow{2}{*}{\footnotesize $|\mathcal{B}_{t,s,p}(\mathbf{x})|$} & 
\footnotesize $(t,0,0)_{q}$ \cite{Lev.1974,Lev.1992}; $(0,0,p)_{q}$; $(1,1,0)_{q}$ \cite{Sal.2013}; 
$(0,1,p)_{2},(1,0,1)_{2}$ \cite{Abu-Sini.2021};
\\
& \footnotesize $(0,1,1)_{q}$ \cite{Sma.2023}; $(2,1,0)_{q},(0,1,2)_{q},(1,0,1)_{q}$ \cite{Abb.2024};
\\
\hline
\multirow{2}{*}{\raisebox{0pt}[1em][0.6em]{
\footnotesize 
$\underset{\mathbf{x}\in A_{q}^{n}}{\max}(|\mathcal{B}_{t,s,p}(\mathbf{x})|)$}} & 
\multirow{2}{*}{
\footnotesize $(t,0,0)_{q}$ \cite{Lev.1974,Lev.1992}; $(0,s,0)_{q}$ \cite{Hir.2000}; 
$(0,0,p)_{q}$; $(1,1,0)_{q}$ \cite{Bar-Lev.2023}; $(0,1,p)_{2},(1,0,1)_{2}$ \cite{Abu-Sini.2021};}
\\
&
\\
\hline
\multirow{2}{*}{\raisebox{0pt}[1em][0.6em]{
\footnotesize 
$\underset{\mathbf{x}\in A_{q}^{n}}{\min}(|\mathcal{B}_{t,s,p}(\mathbf{x})|)$}} & 
\multirow{2}{*}{
\footnotesize $(t,0,0)_{q}$ \cite{Lev.1974,Lev.1992}; $(0,s,0)_{q}$; 
$(0,0,p)_{q}$; $(t,t,0)_{q}$ \cite{Bar-Lev.2023}; $(0,1,p)_{2},(1,0,1)_{2}$ \cite{Abu-Sini.2021};}
\\
&
\\
\hline
\multirow{2}{*}{
\raisebox{0pt}[1.2em][0.6em]{\footnotesize 
$\underset{\mathbf{x}\in A_{q}^{n}}{\mathbb{E}}(|\mathcal{B}_{t,s,p}(\mathbf{x})|)$}} & 
\multirow{2}{*}{
\footnotesize $(t,0,0)_{q}$ \cite{Lev.1974,Lev.1992}; $(0,s,0)_{q}$(cf., \cite{Hir.2000,Lev.2002}); 
$(0,0,p)_{q}$; $(1,1,0)_{q}$ \cite{Bar-Lev.2023};}
\\
&
\\
\hline
\end{tabular}
\end{center}
\end{table}

Throughout this paper, we assume that $n,q,t,s,p$ are non-negative integers, 
$n\geq s$ and $q\geq 1$. The following theorem establishes the minimum size of 
$t$-insertion $s$-deletion $p$-substitution balls, along with the necessary and sufficient 
condition for achieving this minimum.

\begin{theorem}\label{M.t1.1}
For any $\mathbf{x}\in A_{q}^{n}$, 
\begin{equation*}
|\mathcal{B}_{t,s,p}(\mathbf{x})|\geq\sum_{i=0}^{t+p}\binom{n+t-s}{i}(q-1)^{i}, 
\end{equation*}
with equality holding if and only if $t=s=0\vee s=p=0\vee s+p\geq n\vee r(\mathbf{x})=1$.
\end{theorem}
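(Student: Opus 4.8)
The plan is to work with the standard alignment description of the ball and to reduce everything to insertion--substitution balls. Writing $m=n+t-s$ and $V_q(m,r)=\sum_{i=0}^{r}\binom{m}{i}(q-1)^{i}$ for the Hamming ball volume (so the claimed bound is exactly $V_q(m,t+p)$), I would first record that $\mathbf y\in\mathcal B_{t,s,p}(\mathbf x)$ if and only if $\mathbf y\in A_q^{m}$ has a length-$(n-s)$ subsequence $\mathbf y_J$ (obtained by deleting the $t$ inserted positions) and $\mathbf x$ has a length-$(n-s)$ subsequence $\mathbf x_I$ (deleting the $s$ deleted positions) with Hamming distance $d_H(\mathbf x_I,\mathbf y_J)\le p$. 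Since operations may be reordered so that all deletions are performed first, this yields the decomposition
\[
\mathcal B_{t,s,p}(\mathbf x)=\bigcup_{\mathbf u\in D_s(\mathbf x)}\mathcal B_{t,0,p}(\mathbf u),
\]
where $D_s(\mathbf x)\subseteq A_q^{n-s}$ is the set of length-$(n-s)$ subsequences of $\mathbf x$ (its $s$-deletion ball). In particular $\mathcal B_{t,s,p}(\mathbf x)\supseteq\mathcal B_{t,0,p}(\mathbf u)$ for each single $\mathbf u\in D_s(\mathbf x)$, so the whole lower bound follows from the insertion--substitution case.

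The technical heart, and what I expect to be the main obstacle, is the following core lemma: for every $\mathbf u\in A_q^{\ell}$ one has $|\mathcal B_{t,0,p}(\mathbf u)|\ge V_q(\ell+t,\,t+p)$, with equality precisely when $r(\mathbf u)=1$ or $p\ge\ell$. The naive hope that $\mathcal B_{t,0,p}(\mathbf u)$ contains a Hamming ball of radius $t+p$ is false (one checks this already for $\mathbf u=01$, $q=2$, $t=1$, $p=0$), because insertions and substitutions are not interchangeable: an insertion places an arbitrary symbol at an arbitrary position but does not behave additively like a substitution. I would prove the lemma by induction realizing the Pascal-type identity $V_q(m,r)=V_q(m-1,r)+(q-1)V_q(m-1,r-1)$ (with $m=\ell+t$, $r=t+p$) combinatorially: classify each reachable $\mathbf y$ by the role of its first symbol $y_1$ in a canonical left-most alignment — matched to $u_1$, inserted, or substituted for $u_1$ — which reduces the tail to $\mathcal B_{t,0,p}(u_2\cdots u_\ell)$, $\mathcal B_{t-1,0,p}(\mathbf u)$, or $\mathcal B_{t,0,p-1}(u_2\cdots u_\ell)$ respectively. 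The base cases $t=0$ (Hamming ball, size $V_q(\ell,p)$) and $p=0$ (Levenshtein's insertion count $V_q(\ell+t,t)$, independent of $\mathbf u$) are known. The delicate point is disjointifying these three families to avoid double counting, and tracking exactly where the inequality in the recursion is strict; this is where the run structure enters, since the strict gain appears precisely at a run boundary $u_k\ne u_{k+1}$, forcing equality to require $r(\mathbf u)=1$ (or the degenerate $p\ge\ell$, where the ball is all of $A_q^m$).

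Granting the lemma, the lower bound of Theorem \ref{M.t1.1} is immediate from the decomposition above. For sufficiency of the equality conditions I would treat the four cases directly: $t=s=0$ gives the Hamming ball of radius $p$ (size $V_q(n,p)$); $s=p=0$ gives Levenshtein's insertion ball (size $V_q(n+t,t)$, equality for every $\mathbf x$); if $s+p\ge n$ then after $s$ deletions all $n-s$ remaining symbols may be freely substituted (as $p\ge n-s$) and $t$ symbols freely inserted, so $\mathcal B_{t,s,p}(\mathbf x)=A_q^{m}$, whose size $q^{m}$ equals $V_q(m,t+p)$ because $t+p\ge m$; and if $r(\mathbf x)=1$ then $D_s(\mathbf x)$ is the single constant word $\mathbf u$, so the union collapses to $\mathcal B_{t,0,p}(\mathbf u)$ with $r(\mathbf u)=1$, which meets the bound by the equality clause of the lemma.

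For necessity I assume none of the four conditions holds, i.e. $r(\mathbf x)\ge2$, $s+p\le n-1$, and neither $t=s=0$ nor $s=p=0$, and produce strict inequality in three (overlapping) regimes. If $t\ge1$ and $p\ge1$, then $n-s\ge p+1\ge2$, so using $r(\mathbf x)\ge2$ I can choose $\mathbf u\in D_s(\mathbf x)$ of length $n-s$ that straddles a run boundary, giving $r(\mathbf u)\ge2$ and $p<n-s=\ell$; the strict clause of the lemma then yields $|\mathcal B_{t,0,p}(\mathbf u)|>V_q(m,t+p)$. In the remaining regimes one of $t,p$ is $0$, so each $\mathcal B_{t,0,p}(\mathbf u)$ has the \emph{same} cardinality $V_q(m,t+p)$ for all $\mathbf u$ (Levenshtein's or Hamming's formula), while $r(\mathbf x)\ge2$ with $1\le s\le n-1$ forces $|D_s(\mathbf x)|\ge2$; since two distinct equal-length words have distinct (hence, being of equal cardinality, mutually incomparable) supersequence sets when $p=0$, and distinct radius-$p$ Hamming balls are mutually incomparable when $t=0$ and $p<n-s$, the union strictly exceeds any single term. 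Thus in every case the inequality is strict, completing the characterization.
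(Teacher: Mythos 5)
Your global architecture is sound and genuinely different from the paper's: the decomposition $\mathcal{B}_{t,s,p}(\mathbf{x})=\bigcup_{\mathbf{u}\in\mathcal{D}_{s}(\mathbf{x})}\mathcal{B}_{t,0,p}(\mathbf{u})$ is correct, your sufficiency cases are right, and the three necessity regimes ($t\geq1\wedge p\geq1$; $p=0$; $t=0$) cover all cases. The regimes $p=0$ and $t=0$ work modulo two facts you assert without proof (distinct centers give distinct radius-$p$ Hamming balls when $p<n-s$; distinct equal-length words have distinct sets of length-$(n+t-s)$ supersequences); both are true and easy, and the first is exactly what the paper extracts from Levenshtein's intersection bound (its Lemma 5.1). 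Also, your core lemma is mis-stated: equality holds whenever $t=0$ or $p=0$ (Hamming and Levenshtein counts), regardless of $r(\mathbf{u})$, so the clause ``equality precisely when $r(\mathbf{u})=1$ or $p\geq\ell$'' needs the hypothesis $t\geq1\wedge p\geq1$; this is a slip rather than a conceptual error, since you only invoke the strict clause in that regime.

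The genuine gap is that the strict-inequality clause of your core lemma --- which is exactly the technical heart of the paper (its Lemma 6.1, the case $t\geq1$, $p\geq1$, $s=0$) --- is not proven, and the induction you sketch cannot prove it as described. Your two disjoint families (first symbol $y_{1}=u_{1}$ with tail in $\mathcal{B}_{t,0,p}(u_{2}\cdots u_{\ell})$; first symbol $y_{1}\neq u_{1}$ with tail in $\mathcal{B}_{t-1,0,p}(\mathbf{u})$) do yield the lower bound via $V_q(m,r)=V_q(m-1,r)+(q-1)V_q(m-1,r-1)$, but strictness cannot simply be ``tracked through the recursion'': take $\mathbf{u}=01\in A_{2}^{2}$, $t=p=1$. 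Both branches are then tight ($\mathcal{B}_{1,0,1}(1)=A_{2}^{2}$ because there $p\geq\ell'$, and $\mathcal{B}_{0,0,1}(01)$ is a Hamming ball), so the recursion gives exactly $V_2(3,2)=7$, yet the true size is $8$: the word $110\in\mathcal{B}_{1,0,1}(01)$ lies in neither family (its first symbol differs from $u_1$ but its tail $10$ is at Hamming distance $2$ from $01$). So precisely in the boundary cases where the run structure must produce a strict gain, you are forced to construct explicit extra elements outside the recursion, uniformly in $n,t,p,q$ for every $\mathbf{u}$ with $r(\mathbf{u})\geq2$ and $1\leq p<\ell$ --- and that construction is the hard content you have deferred. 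The paper does it by building an injection $f:\mathcal{B}_{t,0,p}(0^{n})\rightarrow\mathcal{B}_{t,0,p}(\mathbf{x})$ (a run-aware refinement of its bijective proof of the insertion-ball count) and then exhibiting a word $\mathbf{z}\in\mathcal{B}_{t,0,p}(\mathbf{x})$, built from a run boundary of $\mathbf{x}$, that provably has no preimage. Until you supply an argument of comparable strength for the strict clause, the ``only if'' direction of the theorem in the regime $t\geq1,p\geq1$ remains unproven.
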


The rest of this paper is organized as follows. 
We introduce notations and known conclusions in Section \ref{sec2}, 
and sketch the proof of Theorem \ref{M.t1.1} in Section \ref{sec3}. 
To complete the proof, we discuss the three cases $t=s$, $t<s$, and $t>s$ 
in Sections \ref{sec4}--\ref{sec6}, respectively. 




\section{Preliminaries}\label{sec2}

We adopt the following conventions: when enumerating elements of a finite set of integers, 
they are listed in ascending order, e.g., $T=\{ i_{1},\ldots,i_{n} \}$ then $i_{1}<\cdots<i_{n}$. 
If the subscript of a letter is $0$, the corresponding index is set to $0$, e.g., $i_{0}=0$. 
Let $[n]=\{ 1,2,\ldots,n \}$, $A_{q}=\{ 0,\ldots, q-1 \}$, 
$A_{q}^{n}=\{ x_{1}\cdots x_{n}\mid x_{1},\ldots,x_{n}\in A_{q} \}$, 
and $[l,m]=\{ l,l+1,\ldots,m \}$ for integers $l$ and $m$. 
For $a,b\in A_{q}$, define $a\oplus b,a\ominus b\in A_{q}$ 
such that $a\oplus b\equiv a+b\bmod{q}$ and $a\ominus b\equiv a-b\bmod{q}$. 
Particularly, define $\overline{a}=a\oplus 1$. Note that if $q\geq 2$, $\overline{a}\neq a$. 
For $\mathbf{x}=x_{1}\cdots x_{n}\in A_{q}^{n}$, $r(\mathbf{x})$ denotes the number of runs 
in $\mathbf{x}$, where a run in $\mathbf{x}$ is a maximum continuous subsequence of 
identical symbols. For $T=\{ i_{1},\ldots,i_{j} \}\subseteq [n]$, 
$\mathbf{x}_{T}$ denotes $x_{i_{1}}\cdots x_{i_{j}}$. For $\mathbf{y}\in A_{q}^{n}$, 
$H(\mathbf{x},\mathbf{y})$ denotes the Hamming distance between $\mathbf{x}$ and $\mathbf{y}$. 
If $S$ is a set and $m$ is a non-negative integer, 
$\binom{S}{m}$ denotes $\{ S'\subseteq S\mid |S'|=m \}$. 
We define the insertion/deletion/substitution balls as follows. 

\begin{definition}
The $t$-insertion $s$-deletion $p$-substitution ball of $\mathbf{x}\in A_{q}^{n}$ is defined by 
\begin{equation*}
\resizebox{0.9\columnwidth}{!}{$
\mathcal{B}_{t,s,p}(\mathbf{x})=
\left\{ \mathbf{z}\in A_{q}^{n+t-s}\mid \exists S_{1}\in\binom{[n]}{n-s}, S_{2}\in\binom{[n+t-s]}{n-s}, 
H(\mathbf{x}_{S_{1}},\mathbf{z}_{S_{2}})\leq p \right\},$}
\end{equation*} 
i.e., the set of sequences in $A_{q}^{n+t-s}$ 
with $t$ insertions, $s$ deletions, and at most $p$ substitutions of $\mathbf{x}$. 
\end{definition}

For example, for $\mathbf{x}\!=\!0000\in\! A_{2}^{4}$, 
$\mathcal{B}_{1,1,1}(\mathbf{x})\!=\!A_{2}^{4}\setminus\{ 1111,1110,1101,1011,0111 \}$. 
We simplify the $t$-insertion ball $\mathcal{B}_{t,0,0}(\mathbf{x})$ to $\mathcal{I}_{t}(\mathbf{x})$ and 
$s$-deletion ball $\mathcal{B}_{0,s,0}(\mathbf{x})$ to $\mathcal{D}_{s}(\mathbf{x})$. 
If $\mathbf{z}\in\mathcal{D}_{s}(\mathbf{x})$, 
the matching set $M(\mathbf{z},\mathbf{x})$ is defined as $\{ i_{1},\ldots,i_{n-s} \}\subseteq [n]$, 
such that $i_{1}=\min\{ e\in [n]\mid x_{e}=z_{1} \}$ 
and for $j\in [2,n-s]$, $i_{j}=\min\{ e\in [i_{j-1}+1,n]\mid x_{e}=z_{j} \}$. 
For example, $M(110,21010)=\{ 2,4,5 \}$. 

Next, we present some known size of balls. 

For any $\mathbf{x}\in A_{q}^{n}$, the size of $p$-substitution ball is known to be 
\begin{equation}\label{M.e2.1}
|\mathcal{B}_{0,0,p}(\mathbf{x})|=\sum_{i=0}^{p}\binom{n}{i}(q-1)^{i}.
\end{equation}
As shown in Theorem \ref{M.t1.1}, since $r(0^{n})=1$, we provide the size of 
$\mathcal{B}_{t,s,p}(0^{n})$ in the following Lemma \ref{M.l2.1} and aim to demonstrate that 
it always achieves the minimum. 
\begin{lemma}\label{M.l2.1}
For $0^{n}\in A_{q}^{n}$, 
\begin{equation*}
|\mathcal{B}_{t,s,p}(0^{n})|=\sum_{i=0}^{t+p}\binom{n+t-s}{i}(q-1)^{i}.
\end{equation*}
\end{lemma}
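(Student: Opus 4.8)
The plan is to reduce the defining condition for $\mathcal{B}_{t,s,p}(0^{n})$ to a single Hamming-weight constraint on $\mathbf{z}$, after which the count becomes an elementary enumeration. Starting from the Definition, I would first note that since every coordinate of $0^{n}$ equals $0$, we have $(0^{n})_{S_{1}}=0^{n-s}$ for \emph{every} choice of $S_{1}\in\binom{[n]}{n-s}$; hence $S_{1}$ plays no role, and $\mathbf{z}\in\mathcal{B}_{t,s,p}(0^{n})$ holds if and only if there exists $S_{2}\in\binom{[n+t-s]}{n-s}$ with $H(0^{n-s},\mathbf{z}_{S_{2}})\leq p$, i.e.\ with $\mathbf{z}_{S_{2}}$ containing at most $p$ nonzero symbols.

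Next I would translate this existential condition into a statement about the Hamming weight $w=|\{\, j:z_{j}\neq 0 \,\}|$ of $\mathbf{z}$. The complement of $S_{2}$ in $[n+t-s]$ has size exactly $(n+t-s)-(n-s)=t$, so choosing $S_{2}$ amounts to discarding $t$ coordinates of $\mathbf{z}$ and requiring the surviving $n-s$ coordinates to carry at most $p$ nonzeros. The best choice discards nonzero coordinates whenever possible, so the minimum number of nonzeros that can remain is $\max(w-t,0)$. Consequently a valid $S_{2}$ exists if and only if $\max(w-t,0)\leq p$, which for every $w$ is equivalent to $w\leq t+p$; I would record the two cases $w\leq t$ and $w>t$ to verify this equivalence cleanly.

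Finally, having shown that $\mathbf{z}\in\mathcal{B}_{t,s,p}(0^{n})$ if and only if $\mathrm{wt}(\mathbf{z})\leq t+p$, the size of the ball is precisely the number of words in $A_{q}^{n+t-s}$ of Hamming weight at most $t+p$. Enumerating by weight $i$, there are $\binom{n+t-s}{i}$ ways to choose the nonzero positions and $(q-1)^{i}$ ways to fill them with nonzero symbols, yielding
\begin{equation*}
|\mathcal{B}_{t,s,p}(0^{n})|=\sum_{i=0}^{t+p}\binom{n+t-s}{i}(q-1)^{i},
\end{equation*}
as claimed. (When $t+p>n+t-s$ the extra terms vanish since $\binom{n+t-s}{i}=0$ for $i>n+t-s$, so the formula remains correct without a separate case.)

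I expect the only genuine subtlety to be the equivalence in the middle step, namely justifying that the size-$(n-s)$ index set minimizing the number of surviving nonzeros is the one that discards nonzero coordinates first, and handling the boundary case $w\leq t$ where all nonzeros can be discarded. Everything else is a direct substitution into the Definition followed by the standard weight enumeration behind \eqref{M.e2.1}, so no heavier machinery is needed.
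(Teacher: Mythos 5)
Your proof is correct and takes essentially the same route as the paper's: both characterize $\mathcal{B}_{t,s,p}(0^{n})$ as the set of words in $A_{q}^{n+t-s}$ of Hamming weight at most $t+p$ and then enumerate by weight. Your unified treatment (the explicit $\max(w-t,0)\leq p$ argument plus the vanishing-binomial remark) even fills in the weight-characterization detail that the paper states without justification, and absorbs the paper's separate case $p\geq n-s$ into one argument.
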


\begin{proof}
If $p\geq n-s$, then $\mathcal{B}_{0,s,p}(0^{n})=A_{q}^{n-s}$, 
$\mathcal{B}_{t,s,p}(0^{n})=A_{q}^{n+t-s}$, and thus 
\begin{equation*}
|\mathcal{B}_{t,s,p}(0^{n})|=q^{n+t-s}
=\sum_{i=0}^{n+t-s}\binom{n+t-s}{i}(q-1)^{i}
=\sum_{i=0}^{t+p}\binom{n+t-s}{i}(q-1)^{i}.
\end{equation*}

If $p<n-s$, then 
\begin{align*}
\mathcal{B}_{t,s,p}(0^{n})
=&\{ \mathbf{y}\in A_{q}^{n+t-s}\mid 
\text{the number of } 0\text{s} \text{ in } \mathbf{y}\in [n-s-p,n+t-s] \}
\\
=&\{ \mathbf{y}\in A_{q}^{n+t-s}\mid 
\text{the number of nonzero symbols} \text{ in } \mathbf{y}\in [0,t+p] \},
\end{align*}
which implies 
\begin{equation*}
|\mathcal{B}_{t,s,p}(0^{n})|=\sum_{i=0}^{t+p}\binom{n+t-s}{i}(q-1)^{i}.
\end{equation*}
This completes the proof. 
\end{proof}

Levenshtein \cite{Lev.1974,Lev.1992} determined the size of $t$-insertion balls 
by induction (cf., \cite{Lev.2001b}). We present a new proof in Lemma \ref{M.l2.2}, 
as it inspires the method in Lemma \ref{M.l6.1}. 
\begin{lemma}\label{M.l2.2}
For any $\mathbf{x}\in A_{q}^{n}$, 
\begin{equation*}
|\mathcal{I}_{t}(\mathbf{x})|=\sum_{i=0}^{t}\binom{n+t}{i}(q-1)^{i}.
\end{equation*}
\end{lemma}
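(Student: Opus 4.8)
The plan is to prove this Levenshtein formula by reformulating insertions as subsequences and counting through a canonical, leftmost embedding—the same matching-set machinery the paper introduces, which I would expect to reuse for Lemma~\ref{M.l6.1}. First I would observe that $\mathbf{z}\in\mathcal{I}_{t}(\mathbf{x})$ if and only if $\mathbf{x}$ is a subsequence of $\mathbf{z}$, i.e.\ $\mathbf{x}\in\mathcal{D}_{t}(\mathbf{z})$; thus for each such $\mathbf{z}$ the matching set $M(\mathbf{x},\mathbf{z})=\{j_{1},\ldots,j_{n}\}\subseteq[n+t]$ is well defined and records the leftmost occurrence of $\mathbf{x}$ inside $\mathbf{z}$. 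This partitions the $n+t$ positions of $\mathbf{z}$ into $n$ \emph{matched} positions (where $z_{j_{k}}=x_{k}$) and $t$ \emph{inserted} positions, the latter distributed among the $n+1$ gaps $G_{0},\ldots,G_{n}$, where $G_{k}$ collects the inserted positions lying strictly between $j_{k}$ and $j_{k+1}$ (with $j_{0}=0$ and $j_{n+1}=n+t+1$).

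The key structural step is to read off the constraints forced by leftmostness. Since $j_{k+1}=\min\{e>j_{k}\mid z_{e}=x_{k+1}\}$, every inserted symbol sitting in a gap $G_{k}$ with $0\leq k\leq n-1$ must differ from $x_{k+1}$, contributing $q-1$ choices each, whereas the trailing gap $G_{n}$ after the last match is unconstrained, contributing $q$ choices each. I would then set up a bijection between $\mathcal{I}_{t}(\mathbf{x})$ and the set of data consisting of a composition $(a_{0},\ldots,a_{n})$ of $t$ together with a choice of inserted symbols obeying these constraints: the forward map sends $\mathbf{z}$ to its greedy decomposition, and the reverse map interleaves the chosen symbols around $x_{1},\ldots,x_{n}$. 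The point requiring care is that the reconstructed word has \emph{precisely} the intended greedy matching—this holds because each constrained gap $G_{k}$ avoids $x_{k+1}$, so the leftmost search cannot match $x_{k+1}$ before position $j_{k+1}$—which makes the two maps mutually inverse and guarantees that no supersequence is counted twice.

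Granting the bijection, the size is $\sum(q-1)^{a_{0}+\cdots+a_{n-1}}q^{a_{n}}$ over all compositions $(a_{0},\ldots,a_{n})$ of $t$. Writing $i$ for the number of inserted symbols falling in the $n$ constrained gaps, this equals $\sum_{i=0}^{t}\binom{n+i-1}{i}(q-1)^{i}q^{t-i}$; expanding $q^{t-i}=((q-1)+1)^{t-i}$ and collecting powers of $q-1$ reduces the claim to the Vandermonde-type identity $\sum_{i=0}^{j}\binom{n+i-1}{i}\binom{t-i}{j-i}=\binom{n+t}{j}$ for each $j\leq t$, which produces exactly $\sum_{i=0}^{t}\binom{n+t}{i}(q-1)^{i}$. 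As a consistency check, the count is manifestly independent of $\mathbf{x}$, agreeing with Lemma~\ref{M.l2.1} at $\mathbf{x}=0^{n}$. I expect the main obstacle to be not the closing identity, which is routine, but the structural heart of the argument: isolating the exact per-gap constraints and verifying that the greedy decomposition is a genuine bijection, so that every supersequence is generated once and only once.
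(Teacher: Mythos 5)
Your proposal is correct, but it closes the argument by a genuinely different route than the paper. Both proofs rest on the same structural fact---the leftmost (greedy) matching is canonical, the positions strictly between consecutive matches $j_{k}$ and $j_{k+1}$ must avoid $x_{k+1}$, and the tail after $j_{n}$ is free---and you are right that the delicate point is checking that reconstruction from gap data reproduces the intended matching. The difference is what is done with this structure. The paper never counts $\mathcal{I}_{t}(\mathbf{x})$ in place: it uses the matching $M(0^{n},\mathbf{y})=\{i_{1},\ldots,i_{n}\}$ of $\mathbf{y}\in\mathcal{I}_{t}(0^{n})$ to define a blockwise symbol shift $z_{e}=y_{e}\oplus x_{l}$ for $e\in[i_{l-1}+1,i_{l}]$ (and $z_{e}=y_{e}$ beyond $i_{n}$), proves this is a bijection from $\mathcal{I}_{t}(0^{n})$ to $\mathcal{I}_{t}(\mathbf{x})$, and reads the size off Lemma~\ref{M.l2.1}; no binomial identity is needed. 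Your route instead enumerates supersequences directly and arrives at $\sum_{i=0}^{t}\binom{n+i-1}{i}(q-1)^{i}q^{t-i}$, after which you need the convolution identity $\sum_{i=0}^{j}\binom{n+i-1}{i}\binom{t-i}{j-i}=\binom{n+t}{j}$ for $j\leq t$. That identity is true and is as routine as you claim: combinatorially, classify the binary words of length $n+t$ with $j$ ones by the number $i$ of ones lying strictly to the right of the $n$-th zero counted from the right (this zero exists since $j\leq t$); or compare coefficients of $x^{j}$ in $\sum_{i\geq 0}\binom{n+i-1}{i}x^{i}(1+x)^{t-i}=(1+x)^{n+t}$. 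Alternatively, you could skip the identity entirely: your gap-count is manifestly independent of the symbols of $\mathbf{x}$, so it suffices to evaluate it at $\mathbf{x}=0^{n}$, where Lemma~\ref{M.l2.1} already gives the closed form---a shortcut that is, in effect, what the paper's bijection implements. What the paper's version buys is reusability: the symbol-shifting map is exactly what Section~\ref{sec6} later weakens into an injection that fails to be a surjection (Lemma~\ref{M.l6.1}), which is the paper's stated reason for presenting this proof; your decomposition proves Lemma~\ref{M.l2.2} equally well and makes the independence from $\mathbf{x}$ transparent, but it does not feed into that later argument as directly.
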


\begin{proof}
For any $\mathbf{x}\in A_{q}^{n}$, 
we construct a bijection from $\mathcal{I}_{t}(0^{n})$ to $\mathcal{I}_{t}(\mathbf{x})$, 
thereby completing the proof through Lemma \ref{M.l2.1}. 

Firstly, we construct a mapping 
\begin{align*}
f: \mathcal{I}_{t}(0^{n})\rightarrow&\;\mathcal{I}_{t}(\mathbf{x}),
\\
\mathbf{y}\rightarrow&\;\mathbf{z}.
\end{align*}
To achieve this, 
for $\mathbf{y}\in \mathcal{I}_{t}(0^{n})$ (i.e., $0^{n}\in \mathcal{D}_{t}(\mathbf{y})$), we assume 
\begin{equation*}
I\triangleq M(0^{n},\mathbf{y})=\{ i_{1},\ldots,i_{n} \}\subseteq [n+t].
\end{equation*}
Based on our initial assumption in this section, 
$i_{0}=0<i_{1}<\cdots<i_{n}\leq n+t$. We define 
$f(\mathbf{y})=\mathbf{z}\in A_{q}^{n+t}$, 
where 
\begin{equation*}
z_{e}=
\begin{cases}
y_{e}\oplus x_{l}, 
& \text{ if } e\in [i_{n}] \text{ (and thus } \exists!\; l\in [n] \text{ such that } e\in [i_{l-1}+1,i_{l}]), 
\\
y_{e}, 
& \text{ if } e\in [i_{n}+1,n+t].
\end{cases}
\end{equation*}
For any $l\in [n]$, since $i_{l}\in [i_{l-1}+1,i_{l}]$ and $y_{i_{l}}=0$ by the definition of 
$M(0^{n},\mathbf{y})$, we have $z_{i_{l}}=y_{i_{l}}\oplus x_{l}=x_{l}$ 
and thus $\mathbf{z}_{I}=\mathbf{x}$, which implies 
$\mathbf{z}=f(\mathbf{y})\in \mathcal{I}_{t}(\mathbf{x})$. 
Therefore, the defined $f$ is indeed a mapping from 
$\mathcal{I}_{t}(0^{n})$ to $\mathcal{I}_{t}(\mathbf{x})$. 

Before proceeding with the subsequent proof, let us first examine an example. 
Consider the above mapping from $\mathcal{I}_{4}(0000)$ to $\mathcal{I}_{4}(\mathbf{x})$ where 
$\mathbf{x}=1001\in A_{3}^{4}$. 
For $\mathbf{y}=20100100\in \mathcal{I}_{4}(0000)$, 
$I=M(0^{4},\mathbf{y})=\{ 2,4,5,7 \}$. 
Assuming $f(\mathbf{y})=\mathbf{z}$, we obtain 
\begin{equation*}
z_{e}=
\begin{cases}
y_{e}\oplus 1, & \text{ if } e\in [1,2],
\\
y_{e}\oplus 0, & \text{ if } e\in [3,4],
\\
y_{e}\oplus 0, & \text{ if } e\in [5,5],
\\
y_{e}\oplus 1, & \text{ if } e\in [6,7],
\\
y_{e}, & \text{ if } e\in [8,8],
\end{cases}
\end{equation*}
and thus $\mathbf{z}=01100210\in \mathcal{I}_{4}(1001)$.

Secondly, we prove that the defined mapping $f$ is injective. 
On the contrary, there exist $\mathbf{y}\neq \mathbf{y}'\in \mathcal{I}_{t}(0^{n})$ 
such that $f(\mathbf{y})=\mathbf{z}=f(\mathbf{y}')=\mathbf{z}'$. 
We similarly define $I'=M(0^{n},\mathbf{y}')=\{ i_{1}',\ldots,i_{n}' \}$. 
Note that $\mathbf{y}\neq \mathbf{y}'$ implies $q\geq 2$. 

\textit{Case 1:} 
$I\neq I'$. Let $c=\min\{ e\in [n]\mid i_{e}\neq i_{e}' \}$. 
As shown in Fig. \ref{M.f2.1}, without loss of generality, we may assume $i_{c}<i_{c}'$. 
By the definition of $M(0^{n},\mathbf{y}')$, $y_{i_{c}}'\neq 0$ 
(Otherwise $i_{c}\in M(0^{n},\mathbf{y}')$, a contradiction). 
Combining with $i_{c}\in [i_{c-1}+1,i_{c}]\cap [i_{c-1}'+1,i_{c}']$, we obtain 
$z_{i_{c}}=y_{i_{c}}\oplus x_{c}=x_{c}$ and $z_{i_{c}}'=y_{i_{c}}'\oplus x_{c}\neq x_{c}$, 
which contradicts to $\mathbf{z}=\mathbf{z}'$. 

\begin{figure}[ht]
\centering
\begin{tikzpicture}
\node at (0,2.5) {\small Position:};

\node at (1.5,2.5) {\small $\cdots$};

\node at (2.5,2.5) {\small $i_{1}$};

\node at (3.5,2.5) {\small $\cdots$};

\node at (4.5,2.5) {\small $i_{2}$};

\node at (5.5,2.5) {\small $\cdots$};

\node at (6.5,2.5) {\small $i_{c-1}$};

\node at (7.5,2.5) {\small $\cdots$};

\node at (8.5,2.5) {\small $i_{c}$};

\node at (9.5,2.5) {\small $\cdots$};

\node at (10.5,2.5) {\small $i_{c}'$};

\node at (11.2,2.5) {\small $\cdots$};


\node at (0.5,1.5) {$\mathbf{y}=$};

\node at (1.5,1.5) {$\ast\ast\ast$};

\node at (2.5,1.5) {$0$};

\node at (3.5,1.5) {$\ast\ast\ast$};

\node at (4.5,1.5) {$0$};

\node at (5.5,1.5) {$\ast\ast\ast$};

\node at (6.5,1.5) {$0$};

\node at (7.5,1.5) {$\ast\ast\ast$};

\node at (8.5,1.5) {$0$};

\node at (9.5,1.5) {$\cdots$};


\node at (0.5,0.5) {$\mathbf{y}'=$};

\node at (1.5,0.5) {$\ast\ast\ast$};

\node at (2.5,0.5) {$0$};

\node at (3.5,0.5) {$\ast\ast\ast$};

\node at (4.5,0.5) {$0$};

\node at (5.5,0.5) {$\ast\ast\ast$};

\node at (6.5,0.5) {$0$};

\node at (7.5,0.5) {$\ast\ast\ast$};

\node at (8.5,0.5) {$\ast$};

\node at (9.5,0.5) {$\ast\ast\ast$};

\node at (10.5,0.5) {$0$};

\node at (11.2,0.5) {$\cdots$};

\end{tikzpicture}
\caption{\textit{Case 1}, where $\ast$ and $\ast\ast\ast$ represent 
an element in $A_{q}\setminus\{ 0 \}$ and a sequence over $A_{q}\setminus\{ 0 \}$, 
respectively.}
\label{M.f2.1}
\end{figure}

\textit{Case 2:} 
$I=I'$. Since $\mathbf{y}\neq \mathbf{y}'$, there exists $m\in [n+t]$ such that $y_{m}\neq y_{m}'$. 

\textit{Case 2.1:} 
$m\in [i_{n}]$. Then $\exists !$ $l\in [n]$ such that $m\in [i_{l-1}+1,i_{l}]$. 
Noting that $z_{m}=y_{m}\oplus x_{l}\neq y_{m}'\oplus x_{l}=z_{m}'$, 
we obtain $\mathbf{z}\neq \mathbf{z}'$, a contradiction. 

\textit{Case 2.2:} 
$m\in [i_{n}+1,n+t]$. Then $z_{m}=y_{m}\neq y_{m}'=z_{m}'$, which implies 
$\mathbf{z}\neq \mathbf{z}'$, a contradiction. 

After examining all cases, we obtain a contradiction. 
Consequently, the defined mapping $f$ is indeed an injection. 

Thirdly, we prove that the defined mapping $f$ is surjective. 
For $\mathbf{z}\in \mathcal{I}_{t}(\mathbf{x})$, we assume 
$M(\mathbf{x},\mathbf{z})=\{ u_{1},\ldots,u_{n} \}\subseteq [n+t]$. 
Define $\mathbf{y}\in A_{q}^{n+t}$, where 
\begin{equation*}
y_{e}=
\begin{cases}
z_{e}\ominus x_{l}, 
& \text{ if } e\in [u_{n}] \text{ (and thus } \exists!\; l\in [n] \text{ such that } e\in [u_{l-1}+1,u_{l}]), 
\\
z_{e}, 
& \text{ if } e\in [u_{n}+1,n+t].
\end{cases}
\end{equation*}
By the definition of $M(\mathbf{x},\mathbf{z})$, for any $l\in [n]$ and $m\in [u_{l-1}+1,u_{l}-1]$, 
$z_{u_{l}}=x_{l}$ and $z_{m}\neq x_{l}$. 
Correspondingly, $y_{u_{l}}=z_{u_{l}}\ominus x_{l}=0$ and $y_{m}=z_{m}\ominus x_{l}\neq 0$. 
It follows that $\mathbf{y}\in \mathcal{I}_{t}(0^{n})$ and $M(0^{n},\mathbf{y})=\{ u_{1},\ldots,u_{n} \}$. 
Furthermore, it is straightforward to verify $f(\mathbf{y})=\mathbf{z}$ 
which implies that the defined mapping $f$ is indeed a surjection. 

Combining these results, we conclude that 
$f$ is a bijection from $\mathcal{I}_{t}(0^{n})$ to $\mathcal{I}_{t}(\mathbf{x})$. 
This completes the proof through Lemma \ref{M.l2.1}. 
\end{proof}




\section{A Proof Sketch of Theorem \ref{M.t1.1}}\label{sec3}

\begin{proof}
By the equation in \eqref{M.e2.1}, Lemma \ref{M.l2.1}, and Lemma \ref{M.l2.2}, 
we can easily verify if $t=s=0\vee s=p=0\vee s+p\geq n\vee r(\mathbf{x})=1$, 
then 
\begin{equation*}
|\mathcal{B}_{t,s,p}(\mathbf{x})|=\sum_{i=0}^{t+p}\binom{n+t-s}{i}(q-1)^{i}. 
\end{equation*}
To complete the proof, in the rest of this paper, we always assume 
\begin{equation*}
(t,s)\neq (0,0)\wedge (s,p)\neq (0,0)\wedge s+p<n\wedge r(\mathbf{x})>1
\end{equation*}
and proceed to prove 
\begin{equation}\label{M.e3.1}
|\mathcal{B}_{t,s,p}(\mathbf{x})|
>\sum_{i=0}^{t+p}\binom{n+t-s}{i}(q-1)^{i}.
\end{equation}
Note that $r(\mathbf{x})>1$ implies $q\geq 2$. 

Specifically, we examine the three cases $t=s$, $t<s$, and $t>s$ in Sections \ref{sec4}--\ref{sec6}, 
and complete the final proof for each case in Lemma \ref{M.l4.1}, Lemma \ref{M.l5.2}, 
and Lemma \ref{M.l6.2}, respectively. 
\end{proof}




\section{$t=s$}\label{sec4}

We slightly modify the method in \cite{Bar-Lev.2023} to complete the proof. 

\begin{lemma}\label{M.l4.1}
For $\mathbf{x}\in A_{q}^{n}$, if 
\begin{equation*}
(t,s)\neq (0,0)\wedge (s,p)\neq (0,0)\wedge s+p<n\wedge r(\mathbf{x})>1\wedge t=s,
\end{equation*} 
then 
\begin{equation*}
|\mathcal{B}_{t,s,p}(\mathbf{x})|
>\sum_{i=0}^{t+p}\binom{n+t-s}{i}(q-1)^{i}.
\end{equation*}
\end{lemma}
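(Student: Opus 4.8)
The plan is to exploit that when $t=s$ we have $n+t-s=n$, so by \eqref{M.e2.1} the target quantity $\sum_{i=0}^{t+p}\binom{n+t-s}{i}(q-1)^{i}=\sum_{i=0}^{s+p}\binom{n}{i}(q-1)^{i}$ is precisely $|\mathcal{B}_{0,0,s+p}(\mathbf{x})|$, the size of the $(s+p)$-substitution ball. It therefore suffices to prove the strict inclusion $\mathcal{B}_{0,0,s+p}(\mathbf{x})\subsetneq\mathcal{B}_{s,s,p}(\mathbf{x})$, and the modification of the $(t,t,0)$ argument in \cite{Bar-Lev.2023} consists exactly in carrying the extra $p$ substitutions through this comparison.

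First I would establish the inclusion $\mathcal{B}_{0,0,s+p}(\mathbf{x})\subseteq\mathcal{B}_{s,s,p}(\mathbf{x})$. Given $\mathbf{z}$ with $H(\mathbf{x},\mathbf{z})\leq s+p$, let $D=\{i\in[n]\mid x_i\neq z_i\}$. I would pick $S_1\in\binom{[n]}{n-s}$ whose complement contains $\min(s,|D|)$ elements of $D$ (padding with indices outside $D$ when $|D|<s$), and set $S_2=S_1$; then $H(\mathbf{x}_{S_1},\mathbf{z}_{S_2})=|D\cap S_1|=\max(0,|D|-s)\leq p$, so $\mathbf{z}\in\mathcal{B}_{s,s,p}(\mathbf{x})$. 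This already yields the non-strict bound.

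The heart of the argument is producing a single witness $\mathbf{z}^{\star}\in\mathcal{B}_{s,s,p}(\mathbf{x})$ with $H(\mathbf{x},\mathbf{z}^{\star})>s+p$. Since $r(\mathbf{x})>1$ I choose $j$ with $x_j\neq x_{j+1}$, and since $s+p<n$ I choose $P\subseteq[n]\setminus\{j,j+1\}$ with $|P|=s-1+p$. Define $\mathbf{z}^{\star}$ by transposing the mismatched pair, i.e. $z^{\star}_j=x_{j+1}$ and $z^{\star}_{j+1}=x_j$, setting $z^{\star}_k=\overline{x_k}$ for $k\in P$, and $z^{\star}_k=x_k$ otherwise. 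Because $x_j\neq x_{j+1}$ and $q\geq 2$, this gives $H(\mathbf{x},\mathbf{z}^{\star})=2+(s-1+p)=s+p+1$, so $\mathbf{z}^{\star}\notin\mathcal{B}_{0,0,s+p}(\mathbf{x})$.

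It remains to certify $\mathbf{z}^{\star}\in\mathcal{B}_{s,s,p}(\mathbf{x})$, which I expect to be the main obstacle, since the transposition shifts the alignment at the run boundary. Concretely, fix $P_1\subseteq P$ with $|P_1|=s-1$ and take $S_1=[n]\setminus(\{j\}\cup P_1)$ and $S_2=[n]\setminus(\{j+1\}\cup P_1)$, both of size $n-s$. The key computation is that when $\mathbf{x}_{S_1}$ and $\mathbf{z}^{\star}_{S_2}$ are listed in increasing order of index, the entry $x_{j+1}$ of $\mathbf{x}_{S_1}$ is matched against $z^{\star}_j=x_{j+1}$ of $\mathbf{z}^{\star}_{S_2}$, an automatic agreement that absorbs the transposition for free, while every other aligned pair compares $x_e$ with $z^{\star}_e$ at a common index $e\in[n]\setminus(P_1\cup\{j,j+1\})$. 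The only disagreements there fall on $P\setminus P_1$, of which there are exactly $(s-1+p)-(s-1)=p$. Hence $H(\mathbf{x}_{S_1},\mathbf{z}^{\star}_{S_2})=p$, so $\mathbf{z}^{\star}\in\mathcal{B}_{s,s,p}(\mathbf{x})$, and combining with the inclusion above yields the desired strict inequality. The delicate point throughout is the index bookkeeping that verifies the boundary shift costs nothing and that precisely $p$ mismatches remain.
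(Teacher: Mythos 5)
Your proposal is correct and follows essentially the same route as the paper: both compare $\mathcal{B}_{s,s,p}(\mathbf{x})$ against the $(s+p)$-substitution ball $\mathcal{B}_{0,0,s+p}(\mathbf{x})$ and then exhibit the identical witness, namely a transposition at a run boundary $x_j\neq x_{j+1}$ combined with $s+p-1$ flips $x_k\mapsto\overline{x_k}$ elsewhere, which has Hamming distance $s+p+1$ from $\mathbf{x}$ yet lies in $\mathcal{B}_{s,s,p}(\mathbf{x})$. The only difference is presentational: the paper certifies the inclusion and the witness's membership operationally (a substitution equals an insertion plus a deletion; delete $x_j$ and reinsert it after $x_{j+1}$), whereas you verify both directly from the definition with explicit index sets $S_1,S_2$, which is a sound and slightly more detailed bookkeeping of the same argument.
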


\begin{proof}
We simplify the problem to proving 
\begin{equation*}
|\mathcal{B}_{t,t,p}(\mathbf{x})|
>\sum_{i=0}^{t+p}\binom{n}{i}(q-1)^{i}
\end{equation*}
under the condition 
$t>0\wedge t+p<n\wedge r(\mathbf{x})>1$. 

On the one hand, noting that a substitution can be replaced by an insertion followed by a deletion, 
$\mathcal{B}_{t,t,p}(\mathbf{x})\supseteq \mathcal{B}_{0,0,t+p}(\mathbf{x})$. 
Combining with \eqref{M.e2.1}, we obtain 
\begin{equation}\label{M.e4.1}
|\mathcal{B}_{t,t,p}(\mathbf{x})|
\geq |\mathcal{B}_{0,0,t+p}(\mathbf{x})|
=\sum_{i=0}^{t+p}\binom{n}{i}(q-1)^{i}.
\end{equation}

On the other hand, noting that $r(\mathbf{x})>1$, 
there exists $m\in [n-1]$ such that $x_{m}\neq x_{m+1}$. 
Since $0\leq t+p-1\leq n-2$, we can arbitrarily choose a set 
$E\in\binom{[n]\setminus\{ m,m+1 \}}{t+p-1}$ and define $\mathbf{z}\in A_{q}^{n}$, where 
\begin{equation*}
z_{e}=
\begin{cases}
x_{e}, & \text{ if } e\notin E\cup\{ m,m+1 \},
\\
x_{m+1}, & \text{ if } e=m,
\\
x_{m}, & \text{ if } e=m+1,
\\
\overline{x_{e}}, & \text{ if } e\in E.
\end{cases} 
\end{equation*}
Clearly, $\mathbf{z}\in\mathcal{B}_{1,1,t+p-1}(\mathbf{x})\subseteq \mathcal{B}_{t,t,p}(\mathbf{x})$, 
since $\mathbf{z}$ can be obtained from $\mathbf{x}$ by 
substituting $x_{e}$ with $\overline{x_{e}}$ for all $e\in E$, deleting $x_{m}$, 
and inserting it after $x_{m+1}$. 
Also, $\mathbf{z}\notin \mathcal{B}_{0,0,t+p}(\mathbf{x})$ since $H(\mathbf{z},\mathbf{x})=t+p+1$. 
Therefore, $\mathbf{z}\in \mathcal{B}_{t,t,p}(\mathbf{x})\setminus\mathcal{B}_{0,0,t+p}(\mathbf{x})$, 
which implies the strict inequality in \eqref{M.e4.1} holds and completes the proof. 
\end{proof}




\section{$t<s$}\label{sec5}

The maximum intersection size of two distinct substitution balls is given in Lemma \ref{Lev.l5.1}, 
which will be employed in Lemma \ref{M.l5.1}. 

\begin{lemma}[\cite{Lev.2001a}]\label{Lev.l5.1}
\begin{equation*}
\max_{\mathbf{x}\neq \mathbf{y}\in A_{q}^{n}}
|\mathcal{B}_{0,0,p}(\mathbf{x})\cap \mathcal{B}_{0,0,p}(\mathbf{y})|
=q\sum_{i=0}^{p-1}\binom{n-1}{i}(q-1)^{i}.
\end{equation*}
\end{lemma}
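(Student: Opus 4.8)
The plan is to reduce the whole problem to a one-parameter quantity. By the symmetry of the Hamming space (coordinate permutations together with independent symbol permutations on each coordinate act transitively on ordered pairs at a fixed distance), the intersection size $|\mathcal{B}_{0,0,p}(\mathbf{x})\cap\mathcal{B}_{0,0,p}(\mathbf{y})|$ depends only on $d=H(\mathbf{x},\mathbf{y})$; I will write $f(d)$ for it. (This is convenient but not essential: the reduction below produces, from any distance-$d$ pair, a specific distance-$(d-1)$ pair whose intersection is at least as large.) I would then prove two facts: (i) $f(1)=q\sum_{i=0}^{p-1}\binom{n-1}{i}(q-1)^{i}$ by a direct count, and (ii) $f(d)\le f(d-1)$ for every $d\ge 2$. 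Together these give $\max_{\mathbf{x}\ne\mathbf{y}}f(d)=f(1)$, which is exactly the claim.

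For (i), take $\mathbf{x},\mathbf{y}$ differing only in the first coordinate and write each candidate $\mathbf{z}$ as the symbol $z_{1}$ followed by its restriction $\mathbf{z}^{-}$ to positions $2,\ldots,n$. If $\mathbf{z}^{-}$ differs from the common word $\mathbf{x}^{-}=\mathbf{y}^{-}$ in exactly $k$ positions, then $\mathbf{z}$ lies in both balls precisely when $k\le p-1$, and in that case every one of the $q$ choices of $z_{1}$ is admissible; summing $q\binom{n-1}{k}(q-1)^{k}$ over $k=0,\ldots,p-1$ yields the stated value of $f(1)$.

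For (ii), I isolate a single differing coordinate. Suppose $\mathbf{x},\mathbf{y}$ differ exactly on positions $\{1,\ldots,d\}$, and let $\mathbf{x}',\mathbf{y}'$ be obtained by overwriting $y_{d}$ with $x_{d}$, so that $\mathbf{x}',\mathbf{y}'$ differ exactly on $\{1,\ldots,d-1\}$ while agreeing with $\mathbf{x},\mathbf{y}$ off coordinate $d$. For a fixed restriction $\mathbf{z}^{-}$ to the positions $\ne d$, set $\alpha=H(\mathbf{z}^{-},\mathbf{x}^{-})$ and $\beta=H(\mathbf{z}^{-},\mathbf{y}^{-})$; these are the same for both pairs. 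Counting the admissible values of the lone coordinate $z_{d}$ — three possibilities for the distance-$d$ pair ($z_{d}=x_{d}$, $z_{d}=y_{d}$, or one of the $q-2$ remaining symbols) and two for the distance-$(d-1)$ pair ($z_{d}=x_{d}$ or one of the $q-1$ remaining symbols) — produces explicit functions $N_{d}(\alpha,\beta)$ and $N_{d-1}(\alpha,\beta)$, so that $f(d)=\sum_{\mathbf{z}^{-}}N_{d}(\alpha,\beta)$ and $f(d-1)=\sum_{\mathbf{z}^{-}}N_{d-1}(\alpha,\beta)$ over the same index set.

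The crux is then the pointwise inequality $N_{d-1}(\alpha,\beta)\ge N_{d}(\alpha,\beta)$. Writing the Iverson brackets $A=[\alpha\le p-1]$, $A'=[\alpha\le p]$, $B=[\beta\le p-1]$, $B'=[\beta\le p]$, one has $N_{d-1}=A'B'+(q-1)AB$ and $N_{d}=A'B+AB'+(q-2)AB$; since the coefficient of $AB$ is exactly $(q-1)-(q-2)=1$, the difference collapses to $N_{d-1}-N_{d}=(A'-A)(B'-B)\ge 0$. Summing over $\mathbf{z}^{-}$ gives $f(d-1)\ge f(d)$, and iterating down to $d=1$ finishes the argument. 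I expect the only real obstacle to be the bookkeeping: once one commits to isolating a single differing coordinate and separating the two thresholds at $p-1$ and $p$, the telescoping factorization $(A'-A)(B'-B)$ makes monotonicity transparent, and the rest is routine counting.
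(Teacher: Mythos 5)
The paper never proves Lemma~\ref{Lev.l5.1}: it is quoted as a known result from \cite{Lev.2001a}, so there is no internal proof to compare against, and the only question is whether your blind argument stands on its own. It does. The symmetry reduction is legitimate: coordinate permutations composed with independent symbol permutations in each coordinate are isometries of $A_{q}^{n}$ carrying balls to balls, and they act transitively on ordered pairs at a fixed Hamming distance, so $|\mathcal{B}_{0,0,p}(\mathbf{x})\cap\mathcal{B}_{0,0,p}(\mathbf{y})|$ is indeed a function $f(d)$ of $d=H(\mathbf{x},\mathbf{y})$ alone (and, as you note, even this can be bypassed, since your merging step turns an arbitrary distance-$d$ pair into a concrete distance-$(d-1)$ pair with at least as large an intersection, and your distance-$1$ count never uses which coordinate or which symbols differ). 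The count $f(1)=q\sum_{i=0}^{p-1}\binom{n-1}{i}(q-1)^{i}$ is correct: with $k=H(\mathbf{z}^{-},\mathbf{x}^{-})$, membership in both balls forces $k\le p-1$ no matter what $z_{1}$ is (one of the two distances is always $k+1$), and conversely $k\le p-1$ admits all $q$ values of $z_{1}$. The monotonicity step is also correct: the three cases $z_{d}=x_{d}$, $z_{d}=y_{d}$, $z_{d}\notin\{x_{d},y_{d}\}$ give $N_{d}=A'B+AB'+(q-2)AB$, the two cases for the merged pair give $N_{d-1}=A'B'+(q-1)AB$, and $N_{d-1}-N_{d}=(A'-A)(B'-B)\ge 0$ because $A\le A'$ and $B\le B'$ are indicators of nested events; summing over $\mathbf{z}^{-}\in A_{q}^{n-1}$ yields $f(d)\le f(d-1)$, and since distance-$1$ pairs exist (the statement implicitly assumes $q\ge 2$, as does the paper's application where $r(\mathbf{x})>1$), the maximum equals $f(1)$. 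As a sanity check, your formula also degenerates correctly at the extremes: $p=0$ gives $0$ and $p\ge n$ gives $q^{n}$. The net effect of your argument is to make the paper self-contained at this point, replacing the citation by an elementary two-step proof (a direct count at distance $1$ plus a coordinate-merging monotonicity argument); this is a clean route to Levenshtein's result, avoiding any need for explicit formulas for the intersection size at general distance $d$.
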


We first prove \eqref{M.e3.1} for $t=0$. 

\begin{lemma}\label{M.l5.1}
For $\mathbf{x}\in A_{q}^{n}$, if 
\begin{equation*}
(t,s)\neq (0,0)\wedge (s,p)\neq (0,0)\wedge s+p<n\wedge r(\mathbf{x})>1\wedge t<s\wedge t=0,
\end{equation*} 
then 
\begin{equation*}
|\mathcal{B}_{t,s,p}(\mathbf{x})|
>\sum_{i=0}^{t+p}\binom{n+t-s}{i}(q-1)^{i}.
\end{equation*}
\end{lemma}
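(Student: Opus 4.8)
The plan is to realize the deletion--substitution ball as a union of ordinary substitution balls and then show this union is strictly larger than any single one of them. Since $t=0$, for $\mathbf{z}\in A_{q}^{n-s}$ the only choice is $S_{2}=[n-s]$, so $\mathbf{z}_{S_{2}}=\mathbf{z}$ and membership in $\mathcal{B}_{0,s,p}(\mathbf{x})$ becomes $H(\mathbf{y},\mathbf{z})\le p$ for some $\mathbf{y}=\mathbf{x}_{S_{1}}\in\mathcal{D}_{s}(\mathbf{x})$. Hence
\[
\mathcal{B}_{0,s,p}(\mathbf{x})=\bigcup_{\mathbf{y}\in\mathcal{D}_{s}(\mathbf{x})}\mathcal{B}_{0,0,p}(\mathbf{y}),
\]
and by \eqref{M.e2.1} every $\mathcal{B}_{0,0,p}(\mathbf{y})$ has size exactly $\sum_{i=0}^{p}\binom{n-s}{i}(q-1)^{i}$, which is precisely the target right-hand side (recall $n+t-s=n-s$ and $t+p=p$). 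Thus it suffices to exhibit two distinct centers in $\mathcal{D}_{s}(\mathbf{x})$ and argue that their two substitution balls do not coincide.

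First I would establish $|\mathcal{D}_{s}(\mathbf{x})|\ge 2$. Since $r(\mathbf{x})>1$, $\mathbf{x}$ is non-constant, so the symbol $a=x_{1}$ occurs a number of times $w$ with $1\le w\le n-1$. When $s$ symbols are deleted, the number $d$ of deleted copies of $a$ may be any integer in $[\max(0,s-(n-w)),\min(s,w)]$, since deleting $d$ copies of $a$ and $s-d$ other symbols is feasible exactly when $d\le w$ and $s-d\le n-w$. A short case analysis shows this interval has length $\min(s,w,n-w,n-s)$, which is at least $1$ because $s\ge 1$, $1\le w\le n-1$, and $n-s\ge 1$ (the last from $s\le s+p<n$). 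Choosing deletions realizing two different values of $d$ produces $\mathbf{y}_{1}\ne\mathbf{y}_{2}\in\mathcal{D}_{s}(\mathbf{x})$ differing in their count of $a$'s, hence distinct.

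With $\mathbf{y}_{1}\ne\mathbf{y}_{2}$ in hand, I would bound the union below by inclusion--exclusion and apply Lemma \ref{Lev.l5.1} to the (distinct) centers $\mathbf{y}_{1},\mathbf{y}_{2}\in A_{q}^{n-s}$:
\[
|\mathcal{B}_{0,s,p}(\mathbf{x})|\ge 2\sum_{i=0}^{p}\binom{n-s}{i}(q-1)^{i}-q\sum_{i=0}^{p-1}\binom{n-s-1}{i}(q-1)^{i}.
\]
The final step is the Pascal identity $\binom{n-s}{i}=\binom{n-s-1}{i}+\binom{n-s-1}{i-1}$, which collapses the ball size to
\[
\sum_{i=0}^{p}\binom{n-s}{i}(q-1)^{i}=q\sum_{i=0}^{p-1}\binom{n-s-1}{i}(q-1)^{i}+\binom{n-s-1}{p}(q-1)^{p},
\]
so the displayed lower bound equals $\sum_{i=0}^{p}\binom{n-s}{i}(q-1)^{i}+\binom{n-s-1}{p}(q-1)^{p}$, and the extra term is strictly positive since $p<n-s$ forces $\binom{n-s-1}{p}\ge 1$ and $q\ge 2$. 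The only genuinely delicate point is guaranteeing two distinct deletions, which the counting argument settles; thereafter Levenshtein's intersection bound supplies exactly the slack $\binom{n-s-1}{p}(q-1)^{p}$, and the hypothesis $s+p<n$ is precisely what keeps this slack positive, giving the strict inequality.
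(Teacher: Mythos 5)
Your proposal is correct and takes essentially the same route as the paper's proof: both exhibit two distinct elements of $\mathcal{D}_{s}(\mathbf{x})$, lower-bound $\mathcal{B}_{0,s,p}(\mathbf{x})$ by the union of their $p$-substitution balls, and combine Lemma \ref{Lev.l5.1} with Pascal's identity, using $s+p<n$ to obtain the strictly positive slack $\binom{n-s-1}{p}(q-1)^{p}$. The only cosmetic difference is how the two distinct deletion outputs are produced: your counting argument on the number of deleted copies of $x_{1}$, versus the paper's explicit construction keeping a common index set $E$ together with one of two adjacent unequal symbols $x_{m}\neq x_{m+1}$.
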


\begin{proof}
We simplify the problem to proving 
\begin{equation*}
|\mathcal{B}_{0,s,p}(\mathbf{x})|
>\sum_{i=0}^{p}\binom{n-s}{i}(q-1)^{i}
\end{equation*}
under the condition 
$s>0\wedge s+p<n\wedge r(\mathbf{x})>1$, 
which is directly obtained by the following three claims. 

\textit{Claim 1:} 
There exist $\mathbf{u}\neq\mathbf{v}\in \mathcal{D}_{s}(\mathbf{x})$. 

\textit{Claim 2:} 
$|\mathcal{B}_{0,0,p}(\mathbf{u})|=|\mathcal{B}_{0,0,p}(\mathbf{v})|=
\sum_{i=0}^{p}\binom{n-s}{i}(q-1)^{i}$. 

\textit{Claim 3:} 
$|\mathcal{B}_{0,0,p}(\mathbf{u})\cap \mathcal{B}_{0,0,p}(\mathbf{v})|
<\sum_{i=0}^{p}\binom{n-s}{i}(q-1)^{i}$. 

\textit{Proof of Claim 1:} 
Since $r(\mathbf{x})>1$, there exists $m\in [n-1]$ such that $x_{m}\neq x_{m+1}$. 
Noting that $0\leq n-s-1\leq n-2$, we can arbitrarily choose a set 
$E\in\binom{[n]\setminus\{ m,m+1 \}}{n-s-1}$ 
and define $\mathbf{u}=\mathbf{x}_{E\cup\{ m \}},\mathbf{v}=\mathbf{x}_{E\cup\{ m+1 \}}$. 
Clearly, $\mathbf{u}\neq\mathbf{v}\in \mathcal{D}_{s}(\mathbf{x})$. 
Thus, \textit{Claim 1} is proved. 

\textit{Proof of Claim 2:} It is directly obtained by the equation in \eqref{M.e2.1}. 

\textit{Proof of Claim 3:} By Lemma \ref{Lev.l5.1}, 
\begin{align}
|\mathcal{B}_{0,0,p}(\mathbf{u})\cap \mathcal{B}_{0,0,p}(\mathbf{v})|
\leq& q\sum_{i=0}^{p-1}\binom{n-s-1}{i}(q-1)^{i}\notag
\\
=& \sum_{i=0}^{p-1}\binom{n-s-1}{i}(q-1)^{i+1}+\sum_{i=0}^{p-1}\binom{n-s-1}{i}(q-1)^{i}\notag
\\
=& \sum_{i=1}^{p}\binom{n-s-1}{i-1}(q-1)^{i}+\sum_{i=0}^{p-1}\binom{n-s-1}{i}(q-1)^{i}\notag
\\
<& \sum_{i=0}^{p}\binom{n-s-1}{i-1}(q-1)^{i}+\sum_{i=0}^{p}\binom{n-s-1}{i}(q-1)^{i}\label{M.e5.1}
\\
=& \sum_{i=0}^{p}\binom{n-s}{i}(q-1)^{i},\notag
\end{align}
where the strict inequality in \eqref{M.e5.1} holds since $n-s-1\geq p$, and 
$\binom{n-s-1}{p}(q-1)^{p}>0$. Thus, \textit{Claim 3} is proved. 
\end{proof}

Next, we return to proving \eqref{M.e3.1} for general $t$. 

\begin{lemma}\label{M.l5.2}
For $\mathbf{x}\in A_{q}^{n}$, if 
\begin{equation*}
(t,s)\neq (0,0)\wedge (s,p)\neq (0,0)\wedge s+p<n\wedge r(\mathbf{x})>1\wedge t<s,
\end{equation*} 
then 
\begin{equation*}
|\mathcal{B}_{t,s,p}(\mathbf{x})|
>\sum_{i=0}^{t+p}\binom{n+t-s}{i}(q-1)^{i}.
\end{equation*}
\end{lemma}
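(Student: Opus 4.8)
The plan is to reduce the general case $t<s$ to the case $t=0$ already settled in Lemma \ref{M.l5.1}, via a single inclusion between balls. Observe first that the two balls $\mathcal{B}_{t,s,p}(\mathbf{x})$ and $\mathcal{B}_{0,s-t,t+p}(\mathbf{x})$ consist of words of the same length, since $n-(s-t)=n+t-s$. I would then establish the inclusion
\begin{equation*}
\mathcal{B}_{t,s,p}(\mathbf{x})\supseteq \mathcal{B}_{0,\,s-t,\,t+p}(\mathbf{x}).
\end{equation*}
Granting this, the lemma follows at once: Lemma \ref{M.l5.1} applied to the right-hand ball (whose parameters $(0,s-t,t+p)$ satisfy that lemma's hypotheses, verified below) yields $|\mathcal{B}_{0,s-t,t+p}(\mathbf{x})|>\sum_{i=0}^{t+p}\binom{n-(s-t)}{i}(q-1)^{i}=\sum_{i=0}^{t+p}\binom{n+t-s}{i}(q-1)^{i}$, and the inclusion upgrades this into the desired bound on $|\mathcal{B}_{t,s,p}(\mathbf{x})|$.

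To prove the inclusion I would argue directly from the definition. Fix $\mathbf{z}\in\mathcal{B}_{0,s-t,t+p}(\mathbf{x})$; since this is a $0$-insertion ball we have $S_{2}=[n+t-s]$, so there is a set $S_{1}\in\binom{[n]}{n+t-s}$ with $H(\mathbf{x}_{S_{1}},\mathbf{z})\le t+p$. Let $D\subseteq[n+t-s]$ be the set of coordinates where $\mathbf{x}_{S_{1}}$ and $\mathbf{z}$ disagree, so $|D|\le t+p$. The idea is to drop $t$ coordinates from the comparison, choosing as many as possible from $D$: pick $K\in\binom{[n+t-s]}{t}$ with $|K\cap D|=\min(t,|D|)$, and let $S_{1}'$ (resp. $S_{2}'$) be obtained from $S_{1}$ (resp. from $[n+t-s]$) by discarding the coordinates indexed by $K$. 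Since discarding a common coordinate index removes exactly one aligned comparison, one gets $|S_{1}'|=|S_{2}'|=n-s$ and $H(\mathbf{x}_{S_{1}'},\mathbf{z}_{S_{2}'})=|D\setminus K|=|D|-\min(t,|D|)\le p$, whence $\mathbf{z}\in\mathcal{B}_{t,s,p}(\mathbf{x})$. Intuitively this is the same device as in Lemma \ref{M.l4.1}: $t$ of the at most $t+p$ substitutions forming $\mathbf{z}$ are re-expressed as $t$ (deletion, insertion) pairs, realizing $\mathbf{z}$ with $s$ deletions, $t$ insertions and at most $p$ substitutions.

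Finally I would confirm that the parameters $(0,s-t,t+p)$ meet the hypotheses of Lemma \ref{M.l5.1}: $t<s$ gives $s-t>0$, so both $(0,s-t)\ne(0,0)$ and $(s-t,t+p)\ne(0,0)$; moreover $(s-t)+(t+p)=s+p<n$ and $r(\mathbf{x})>1$ hold by assumption, while the conditions $t<s$ and $t=0$ of that lemma are immediate for these parameters. I expect the only genuine content to lie in the inclusion, and within it the one point needing care is the case $|D|<t$: there every disagreeing coordinate is removed together with $t-|D|$ agreeing ones, leaving distance $0\le p$, so the bookkeeping goes through uniformly. Everything else is a transfer of hypotheses plus the monotonicity $|\mathcal{B}_{t,s,p}(\mathbf{x})|\ge|\mathcal{B}_{0,s-t,t+p}(\mathbf{x})|$.
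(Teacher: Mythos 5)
Your proposal is correct and takes essentially the same route as the paper: both reduce the case $t<s$ to Lemma \ref{M.l5.1} applied with parameters $(0,s-t,t+p)$, using the containment $\mathcal{B}_{t,s,p}(\mathbf{x})\supseteq\mathcal{B}_{0,s-t,t+p}(\mathbf{x})$ together with the same transfer of hypotheses. The only difference is cosmetic: you prove the containment explicitly (discarding $t$ aligned coordinate pairs, preferentially disagreeing ones), whereas the paper simply asserts the monotonicity $|\mathcal{B}_{t,s,p}(\mathbf{x})|\ge|\mathcal{B}_{0,s-t,t+p}(\mathbf{x})|$, implicitly invoking the observation that a substitution can be simulated by a deletion--insertion pair.
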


\begin{proof}
Noting that 
\begin{equation*}
(t,s)\neq (0,0)\wedge (s,p)\neq (0,0)\wedge s+p<n\wedge r(\mathbf{x})>1\wedge t<s
\end{equation*} 
implies
\begin{equation*}
\resizebox{\columnwidth}{!}{$
(0,s-t)\neq (0,0)\wedge (s-t,t+p)\neq (0,0)\wedge (s-t)+(t+p)<n\wedge r(\mathbf{x})>1
\wedge 0<s-t,$}
\end{equation*} 
we can employ Lemma \ref{M.l5.1} for $|\mathcal{B}_{0,s-t,t+p}(\mathbf{x})|$ 
and thus 
\begin{equation*}
|\mathcal{B}_{t,s,p}(\mathbf{x})|
\geq|\mathcal{B}_{0,s-t,t+p}(\mathbf{x})|
>\sum_{i=0}^{t+p}\binom{n+t-s}{i}(q-1)^{i},
\end{equation*}
which completes the proof. 
\end{proof}




\section{$t>s$}\label{sec6}

From \eqref{M.e2.1} and Lemma \ref{M.l2.2}, 
we can observe that the size of insertion balls and substitution balls is uniform, 
independent from the specific choice of $\mathbf{x}\in A_{q}^{n}$. 
Intriguingly, as we will demonstrate in this section, 
generally the size of insertion/substitution balls is no longer uniform. 

We first prove \eqref{M.e3.1} for $s=0$. 

\begin{lemma}\label{M.l6.1}
For $\mathbf{x}\in A_{q}^{n}$, if 
\begin{equation*}
(t,s)\neq (0,0)\wedge (s,p)\neq (0,0)\wedge s+p<n\wedge r(\mathbf{x})>1\wedge t>s\wedge s=0,
\end{equation*} 
then 
\begin{equation*}
|\mathcal{B}_{t,s,p}(\mathbf{x})|
>\sum_{i=0}^{t+p}\binom{n+t-s}{i}(q-1)^{i}.
\end{equation*}
\end{lemma}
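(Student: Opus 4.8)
The target quantity $\sum_{i=0}^{t+p}\binom{n+t}{i}(q-1)^{i}$ equals $|\mathcal{B}_{t,0,p}(0^{n})|$ by Lemma \ref{M.l2.1}, and by that lemma $\mathcal{B}_{t,0,p}(0^{n})$ is exactly the set of $\mathbf{y}\in A_{q}^{n+t}$ with at most $t+p$ nonzero coordinates. Unwinding the hypotheses (with $s=0$, $t>s$ forces $t\geq 1$; $(s,p)\neq(0,0)$ forces $p\geq 1$; and $s+p<n$ forces $p<n$), the plan is to build an injection $f\colon\mathcal{B}_{t,0,p}(0^{n})\to\mathcal{B}_{t,0,p}(\mathbf{x})$ that is not surjective, which immediately yields the strict inequality. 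The construction will generalize the bijection of Lemma \ref{M.l2.2} from pure insertions to insertions together with a bounded number of substitutions.

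For the map itself, I would attach to each $\mathbf{y}\in\mathcal{B}_{t,0,p}(0^{n})$ a canonical set $I=\{i_{1},\ldots,i_{n}\}\subseteq[n+t]$ of $n$ matched positions, the remaining $t$ positions playing the role of insertions, chosen by a greedy left-to-right matching of $0^{n}$ into $\mathbf{y}$ that is permitted to match a symbol of $0^{n}$ to a nonzero entry of $\mathbf{y}$ at most $p$ times. Since $\mathbf{y}$ has at most $t+p$ nonzeros, such a matching exists and leaves at most $p$ nonzeros inside $I$. I would then define $\mathbf{z}=f(\mathbf{y})$ by the same blockwise rule as in Lemma \ref{M.l2.2}, namely $z_{e}=y_{e}\oplus x_{l}$ for $e\in[i_{l-1}+1,i_{l}]$ and $z_{e}=y_{e}$ for $e>i_{n}$. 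Each matched zero of $\mathbf{y}$ becomes $x_{l}$, while each of the at most $p$ matched nonzeros becomes a coordinate differing from $x_{l}$, so $H(\mathbf{x},\mathbf{z}_{I})\leq p$ and hence $\mathbf{z}\in\mathcal{B}_{t,0,p}(\mathbf{x})$.

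Injectivity I would prove in the style of Lemma \ref{M.l2.2}: assuming $f(\mathbf{y})=f(\mathbf{y}')=\mathbf{z}$, I would compare the two matchings and locate a coordinate forcing $\mathbf{y}=\mathbf{y}'$, the crucial point being that the generalized matching of $0^{n}$ into $\mathbf{y}$ used to build $\mathbf{z}$ must coincide with the corresponding generalized matching of $\mathbf{x}$ into $\mathbf{z}$, so that $\mathbf{y}$ is recoverable from $\mathbf{z}$ by subtracting $\mathbf{x}$ blockwise via $\ominus$. For non-surjectivity I would invoke $r(\mathbf{x})>1$: fixing $m$ with $x_{m}\neq x_{m+1}$, I would exhibit a specific $\mathbf{z}^{*}\in\mathcal{B}_{t,0,p}(\mathbf{x})$ built by substituting and inserting around this run boundary, and show that undoing the blockwise construction on $\mathbf{z}^{*}$ can only yield a preimage with more than $t+p$ nonzeros, so that $\mathbf{z}^{*}$ lies outside the image of $f$.

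The main obstacle is the design of the generalized matching so that it is simultaneously well defined, leaves at most $p$ nonzeros in $I$, and, most importantly, satisfies the coincidence property that makes $f$ injective. Allowing up to $p$ substitutions destroys the unambiguous ``leftmost zero'' rule available in Lemma \ref{M.l2.2}: a symbol of $0^{n}$ may now be matched either to a later zero of $\mathbf{y}$ or to an earlier nonzero, and the wrong tie-breaking causes distinct inputs to collide, as one already sees in small examples. Pinning down the correct greedy rule, proving that the forward and backward matchings agree, and verifying that the run-boundary witness $\mathbf{z}^{*}$ genuinely escapes the image are where essentially all the work will lie.
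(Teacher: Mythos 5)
Your overall strategy is exactly the paper's: build a map $f\colon\mathcal{B}_{t,0,p}(0^{n})\to\mathcal{B}_{t,0,p}(\mathbf{x})$ generalizing the bijection of Lemma \ref{M.l2.2}, prove it injective, and use $r(\mathbf{x})>1$ to exhibit an element outside its image. But as a proof the proposal has a genuine gap, and you name it yourself: everything hinges on the design of the ``generalized matching,'' and you leave that design open, conceding that the wrong tie-breaking destroys injectivity. A greedy matching of $0^{n}$ into $\mathbf{y}$ that is ``permitted to match a zero to a nonzero at most $p$ times'' is not yet a definition until you say when a mismatch is spent versus when the matcher skips ahead to the next zero of $\mathbf{y}$; different choices give different maps, and, as you observe, some of them collide. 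Since the injectivity argument, the recoverability property (your ``forward and backward matchings agree''), and the verification that the witness $\mathbf{z}^{*}$ escapes the image all depend on that unspecified rule, none of the three pillars of the proof is actually in place.

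The paper resolves the ambiguity with a device your proposal is missing: it never matches $0^{n}$ at all. For $\mathbf{y}\in\mathcal{B}_{t,0,p}(0^{n})$ it takes $I=M(0^{n-p},\mathbf{y})$ --- the standard, unambiguous leftmost matching of only $n-p$ zeros --- then lets $J$ be the $p$ smallest elements of $[n+t]\setminus I$ and sets $K=I\cup J=\{k_{1},\ldots,k_{n}\}$, applying the blockwise rule of Lemma \ref{M.l2.2} along $K$. Because $I$ is a classical matching set and $J$ is determined by $I$, the map is well defined with no tie-breaking whatsoever; at most $p$ nonzeros of $\mathbf{y}$ lie in positions of $K$, so $H(\mathbf{z}_{K},\mathbf{x})\leq p$ and $f$ lands in $\mathcal{B}_{t,0,p}(\mathbf{x})$, and injectivity follows by a case analysis close to that of Lemma \ref{M.l2.2}, split on whether $j_{p}<i_{c}$ or $j_{p}>i_{c}$ at the first index $i_{c}$ where the matchings disagree. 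Non-surjectivity is also more delicate than your sketch suggests: the paper's witness $\mathbf{z}$ is engineered so that $z_{i}\neq x_{i}$ for every $i\in[p]$ (which forces any preimage to have $J=[p]$ and $I\subseteq[p+1,n+t]$, hence $\mathbf{z}_{I}=\mathbf{x}_{[p+1,n]}$) and simultaneously $\mathbf{x}_{[p+1,n]}\notin\mathcal{D}_{t}(\mathbf{z}_{[p+1,n+t]})$, which is the contradiction; constructing such a $\mathbf{z}$ takes a two-stage construction (substitute $x_{n}$, insert a prefix $\overline{x_{1}}$ and a suffix $(\overline{x_{n}})^{t-1}$, then further substitutions) with a case split on whether $r(\mathbf{x}_{[p]})>1$ or $r(\mathbf{x}_{[p,n]})>1$, not a single substitution near one run boundary. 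So your plan points in the right direction, but the actual content of the proof --- the matching that makes $f$ well defined and injective, and the witness that makes it non-surjective --- is precisely what remains to be supplied.
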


\begin{proof}
We simplify the problem to proving 
\begin{equation*}
|\mathcal{B}_{t,0,p}(\mathbf{x})|
>\sum_{i=0}^{t+p}\binom{n+t}{i}(q-1)^{i}
\end{equation*}
under the condition 
$t>0\wedge p>0\wedge p<n\wedge r(\mathbf{x})>1$. 
Inspired by the new proof of Lemma \ref{M.l2.2}, 
we construct a mapping from 
$\mathcal{B}_{t,0,p}(0^{n})$ to $\mathcal{B}_{t,0,p}(\mathbf{x})$ 
that is injective but not surjective to complete the proof. 

Firstly, we construct a mapping 
\begin{align*}
f: \mathcal{B}_{t,0,p}(0^{n})\rightarrow&\;\mathcal{B}_{t,0,p}(\mathbf{x}),
\\
\mathbf{y}\rightarrow&\;\mathbf{z}.
\end{align*}
To achieve this, for $\mathbf{y}\in \mathcal{B}_{t,0,p}(0^{n})$, 
as mentioned in the proof of Lemma \ref{M.l2.1}, 
the number of $0$s in $\mathbf{y}$ is at least $n-p$. 
Thus, we can assume 
\begin{equation*}
I\triangleq M(0^{n-p},\mathbf{y})=\{ i_{1},\ldots,i_{n-p} \}\subseteq [n+t].
\end{equation*}
Additionally, assume that the set $J=\{ j_{1},\ldots,j_{p} \}$ contains the smallest $p$ 
elements in $[n+t]\setminus I$. Let $K=I\cup J=\{ k_{1},\ldots,k_{n} \}$. 
Based on our initial assumption in Section \ref{sec2}, 
$k_{0}=0<k_{1}<\cdots<k_{n}\leq n+t$. We define 
$f(\mathbf{y})=\mathbf{z}\in A_{q}^{n+t}$, 
where 
\begin{equation*}
z_{e}=
\begin{cases}
y_{e}\oplus x_{l}, 
& \text{ if } e\in [k_{n}] \text{ (and thus } \exists!\; l\in [n] \text{ such that } e\in [k_{l-1}+1,k_{l}]), 
\\
y_{e}, 
& \text{ if } e\in [k_{n}+1,n+t].
\end{cases}
\end{equation*}
Since $\mathbf{y}_{I}=0^{n-p}$ and $I\subseteq K$, 
there are at most $p$ nonzero symbols in $\mathbf{y}_{K}$. 
Combining with $z_{k_{l}}=y_{k_{l}}\oplus x_{l}$ for any $l\in [n]$, 
$\mathbf{z}_{K}$ and $\mathbf{x}$ differ in at most $p$ positions, 
i.e., $H(\mathbf{z}_{K},\mathbf{x})\leq p$, 
which implies $\mathbf{z}=f(\mathbf{y})\in \mathcal{B}_{t,0,p}(\mathbf{x})$. 
Therefore, the defined $f$ is indeed a mapping from 
$\mathcal{B}_{t,0,p}(0^{n})$ to $\mathcal{B}_{t,0,p}(\mathbf{x})$. 

Before proceeding with the subsequent proof, let us provide some additional explanations 
and examine an example. When $p=0$, the mapping $f$ defined here coincides with that 
in Lemma \ref{M.l2.2}. However, given that the current condition is $0<p<n$, it will be seen that 
although the mapping remains injective, it is no longer surjective. 
Consider the above mapping from $\mathcal{B}_{4,0,2}(0000)$ to 
$\mathcal{B}_{4,0,2}(\mathbf{x})$ where $\mathbf{x}=1001\in A_{3}^{4}$. 
Take $\mathbf{y}=20100100\in \mathcal{B}_{4,0,2}(0000)$, with 
$I=M(0^{2},\mathbf{y})=\{ 2,4 \}$, $J=\{ 1,3 \}$, and $K=\{ 1,2,3,4 \}$. 
Assuming $f(\mathbf{y})=\mathbf{z}$, we obtain 
\begin{equation*}
z_{e}=
\begin{cases}
y_{e}\oplus 1, & \text{ for } e\in [1,1],
\\
y_{e}\oplus 0, & \text{ for } e\in [2,2],
\\
y_{e}\oplus 0, & \text{ for } e\in [3,3],
\\
y_{e}\oplus 1, & \text{ for } e\in [4,4],
\\
y_{e}, & \text{ for } e\in [5,8],
\end{cases}
\end{equation*}
and thus $\mathbf{z}=00110100\in \mathcal{B}_{4,0,2}(1001)$.

Secondly, we prove that the defined mapping $f$ is injective. 
On the contrary, there exist $\mathbf{y}\neq \mathbf{y}'\in \mathcal{B}_{t,0,p}(0^{n})$ 
such that $f(\mathbf{y})=\mathbf{z}=f(\mathbf{y}')=\mathbf{z}'$. 
We similarly define $I'=M(0^{n-p},\mathbf{y}')=\{ i_{1}',\ldots,i_{n-p}' \}$, 
$J'=\{ j_{1}',\ldots,j_{p}' \}$ which contains the smallest $p$ 
elements in $[n+t]\setminus I'$, and $K'=I'\cup J'=\{ k_{1}',\ldots,k_{n}' \}$. 

\textit{Case 1:} 
$I\neq I'$. Let $c=\min\{ e\in [n-p]\mid i_{e}\neq i_{e}' \}$. 
As shown in Fig. \ref{M.f6.1}, without loss of generality, we may assume $i_{c}<i_{c}'$. 
By the definition of $M(0^{n-p},\mathbf{y}')$, $y_{i_{c}}'\neq 0$ 
(Otherwise $i_{c}\in M(0^{n-p},\mathbf{y}')$, a contradiction). 

\begin{figure}[ht]
\centering
\begin{tikzpicture}
\node at (0,2.5) {\small Position:};

\node at (1.5,2.5) {\small $\cdots$};

\node at (2.5,2.5) {\small $i_{1}$};

\node at (3.5,2.5) {\small $\cdots$};

\node at (4.5,2.5) {\small $i_{2}$};

\node at (5.5,2.5) {\small $\cdots$};

\node at (6.5,2.5) {\small $i_{c-1}$};

\node at (7.5,2.5) {\small $\cdots$};

\node at (8.5,2.5) {\small $i_{c}$};

\node at (9.5,2.5) {\small $\cdots$};

\node at (10.5,2.5) {\small $i_{c}'$};

\node at (11.2,2.5) {\small $\cdots$};


\node at (0.5,1.5) {$\mathbf{y}=$};

\node at (1.5,1.5) {$\ast\ast\ast$};

\node at (2.5,1.5) {$0$};

\node at (3.5,1.5) {$\ast\ast\ast$};

\node at (4.5,1.5) {$0$};

\node at (5.5,1.5) {$\ast\ast\ast$};

\node at (6.5,1.5) {$0$};

\node at (7.5,1.5) {$\ast\ast\ast$};

\node at (8.5,1.5) {$0$};

\node at (9.5,1.5) {$\cdots$};


\node at (0.5,0.5) {$\mathbf{y}'=$};

\node at (1.5,0.5) {$\ast\ast\ast$};

\node at (2.5,0.5) {$0$};

\node at (3.5,0.5) {$\ast\ast\ast$};

\node at (4.5,0.5) {$0$};

\node at (5.5,0.5) {$\ast\ast\ast$};

\node at (6.5,0.5) {$0$};

\node at (7.5,0.5) {$\ast\ast\ast$};

\node at (8.5,0.5) {$\ast$};

\node at (9.5,0.5) {$\ast\ast\ast$};

\node at (10.5,0.5) {$0$};

\node at (11.2,0.5) {$\cdots$};

\end{tikzpicture}
\caption{\textit{Case 1}, where $\ast$ and $\ast\ast\ast$ represent 
an element in $A_{q}\setminus\{ 0 \}$ and a sequence over $A_{q}\setminus\{ 0 \}$, 
respectively.}
\label{M.f6.1}
\end{figure}

\textit{Case 1.1:} $j_{p}<i_{c}$. 
This indicates that $|[i_{c}-1]\setminus I|\geq p$. 
Referring to Fig. \ref{M.f6.1}, $|[i_{c}-1]\setminus I'|\geq p$ also holds. 
By the definition of $J$ and $J'$, we obtain $J=J'\subseteq [i_{c}-1]$ which implies 
$i_{c}=k_{c+p}$, $i_{c}'=k_{c+p}'$, and $i_{c}\in [k_{c+p-1}'+1,k_{c+p}']$. 
Thus, $z_{i_{c}}=y_{i_{c}}\oplus x_{c+p}=x_{c+p}$ and 
$z_{i_{c}}'=y_{i_{c}}'\oplus x_{c+p}\neq x_{c+p}$, 
which contradicts to $\mathbf{z}=\mathbf{z}'$. 

\textit{Case 1.2:} $j_{p}>i_{c}$. 
This indicates that $|[i_{c}-1]\setminus I|<p$. 
Referring to Fig. \ref{M.f6.1}, $|[i_{c}-1]\setminus I'|<p$ also holds. 
By the definition of $J$ and $J'$, we obtain $[i_{c}]\setminus I\subseteq J$ and 
$[i_{c}]\setminus I'\subseteq J'$ which implies $[i_{c}]\subseteq K\cap K'$. 
Then, we have $k_{1}=k_{1}'=1,\ldots,k_{i_{c}}=k_{i_{c}}'=i_{c}$. 
Thus, $z_{i_{c}}=y_{i_{c}}\oplus x_{i_{c}}=x_{i_{c}}$ and 
$z_{i_{c}}'=y_{i_{c}}'\oplus x_{i_{c}}\neq x_{i_{c}}$, 
which contradicts to $\mathbf{z}=\mathbf{z}'$. 

\textit{Case 2:} 
$I=I'$. By definition, $J=J'$ and $K=K'=\{ k_{1},\ldots,k_{n} \}$. 
Since $\mathbf{y}\neq \mathbf{y}'$, there exists $m\in [n+t]$ such that $y_{m}\neq y_{m}'$. 

\textit{Case 2.1:} 
$m\in [k_{n}]$. Then $\exists !$ $l\in [n]$ such that $m\in [k_{l-1}+1,k_{l}]$. 
Noting that $z_{m}=y_{m}\oplus x_{l}\neq y_{m}'\oplus x_{l}=z_{m}'$, 
we obtain $\mathbf{z}\neq \mathbf{z}'$, a contradiction. 

\textit{Case 2.2:} 
$m\in [k_{n}+1,n+t]$. Then $z_{m}=y_{m}\neq y_{m}'=z_{m}'$, which implies 
$\mathbf{z}\neq \mathbf{z}'$, a contradiction. 

After examining all cases, we obtain a contradiction. 
Consequently, the defined mapping $f$ is indeed an injection. 

Thirdly, we prove that the defined mapping $f$ is not surjective, 
which is directly obtained from the following two claims. 

\textit{Claim 1:} 
There exists $\mathbf{z}\in\mathcal{B}_{t,0,p}(\mathbf{x})$ such that $z_{i}\neq x_{i}$ 
for any $i\in [p]$ and $\mathbf{x}_{[p+1,n]}\notin \mathcal{D}_{t}(\mathbf{z}_{[p+1,n+t]})$. 

\textit{Claim 2:} 
Such $\mathbf{z}$ in \textit{Claim 1} has no preimage under the mapping $f$. 

\textit{Proof of Claim 1:} 
We obtain $\mathbf{z}$ from $\mathbf{x}$ in two steps. 

In the first stage, as shown in Fig. \ref{M.f6.2}, we substitute $x_{n}$ with 
$\overline{x_{n}}$ and insert a prefix $\overline{x_{1}}$ and a suffix $(\overline{x_{n}})^{t-1}$ 
from $\mathbf{x}$ to obtain an intermediate sequence $\mathbf{w}$. 
This process requires a total of $t$ insertions and one substitution. 

\begin{figure}[ht]
\centering
\begin{tikzpicture}
\node at (0,2.5) {\small Position:};

\node at (1.5,2.5) {\small $1$};

\node at (2.3,2.5) {\small $2$};

\node at (3.1,2.5) {\small $\cdots$};

\node at (3.9,2.5) {\small $p$};

\node at (4.7,2.53) {\small $p+1$};

\node at (5.5,2.5) {\small $\cdots$};

\node at (6.3,2.53) {\small $n-1$};

\node at (7.3,2.51) {\small $n$};

\node at (8.3,2.53) {\small $n+1$};

\node at (9.3,2.53) {\small $n+2$};

\node at (10.1,2.5) {\small $\cdots$};

\node at (10.9,2.53) {\small $n+t$};


\node at (0.5,1.5) {$\mathbf{x}=$};

\node at (1.5,1.5) {\small $x_{1}$};

\node at (2.3,1.5) {\small $x_{2}$};

\node at (3.1,1.5) {\small $\cdots$};

\node at (3.9,1.5) {\small $x_{p}$};

\node at (4.7,1.5) {\small $x_{p+1}$};

\node at (5.5,1.5) {\small $\cdots$};

\node at (6.3,1.5) {\small $x_{n-1}$};

\node at (7.3,1.5) {\small $x_{n}$};


\node at (0.5,0.5) {$\mathbf{w}=$};

\node at (1.5,0.53) {\small $\overline{x_{1}}$};

\node at (2.3,0.5) {\small $x_{1}$};

\node at (3.1,0.5) {\small $\cdots$};

\node at (3.9,0.5) {\small $x_{p-1}$};

\node at (4.7,0.5) {\small $x_{p}$};

\node at (5.5,0.5) {\small $\cdots$};

\node at (6.3,0.5) {\small $x_{n-2}$};

\node at (7.3,0.5) {\small $x_{n-1}$};

\node at (8.3,0.53) {\small $\overline{x_{n}}$};

\node at (9.3,0.53) {\small $\overline{x_{n}}$};

\node at (10.1,0.5) {\small $\cdots$};

\node at (10.9,0.53) {\small $\overline{x_{n}}$};

\end{tikzpicture}
\caption{In the first stage, we substitute $x_{n}$ with $\overline{x_{n}}$ and 
insert a prefix $\overline{x_{1}}$ and a suffix $(\overline{x_{n}})^{t-1}$ 
from $\mathbf{x}$ to obtain $\mathbf{w}$.}
\label{M.f6.2}
\end{figure}

In the second stage, noting that $r(\mathbf{x})>1$ implies $r(\mathbf{x}_{[p]})>1$ or 
$r(\mathbf{x}_{[p,n]})>1$, we discuss these two cases to obtain $\mathbf{z}$ from $\mathbf{w}$ 
by at most $p-1$ substitutions. The common point between them is that the positions 
of substitutions in $\mathbf{w}$ belong to the set $[2,n]$. 

\textit{Case 1':} 
$r(\mathbf{x}_{[p]})>1$. Then $p\geq 2$ and there exists $i\in [p-1]$ such that $x_{i}\neq x_{i+1}$. 
As shown in Fig. \ref{M.f6.2} and Fig. \ref{M.f6.3}, we perform substitutions on the symbols 
of $\mathbf{w}$ by substituting $w_{e}$ with $\overline{x_{e}}$ for all 
$e\in ([2,p]\cup\{ n \})\setminus\{ i+1 \}$. This yields the final sequence $\mathbf{z}$. 

\begin{figure}[ht]
\centering
\begin{tikzpicture}
\node at (0,2.5) {\small Position:};

\node at (1.5,2.5) {\small $1$};

\node at (2.3,2.5) {\small $2$};

\node at (3.1,2.5) {\small $\cdots$};

\node at (3.9,2.5) {\small $i+1$};

\node at (4.7,2.5) {\small $\cdots$};

\node at (5.5,2.5) {\small $p$};

\node at (6.3,2.53) {\small $p+1$};

\node at (7.1,2.5) {\small $\cdots$};

\node at (7.9,2.5) {\small $n-1$};

\node at (8.7,2.5) {\small $n$};

\node at (9.5,2.5) {\small $n+1$};

\node at (10.3,2.5) {\small $\cdots$};

\node at (11.1,2.5) {\small $n+t$};


\node at (0.5,1.5) {$\mathbf{x}=$};

\node at (1.5,1.5) {\small $x_{1}$};

\node at (2.3,1.5) {\small $x_{2}$};

\node at (3.1,1.5) {\small $\cdots$};

\node at (3.9,1.5) {\small $x_{i+1}$};

\node at (4.7,1.5) {\small $\cdots$};

\node at (5.5,1.5) {\small $x_{p}$};

\node at (6.3,1.5) {\small $x_{p+1}$};

\node at (7.1,1.5) {\small $\cdots$};

\node at (7.9,1.5) {\small $x_{n-1}$};

\node at (8.7,1.5) {\small $x_{n}$};


\node at (0.5,0.5) {$\mathbf{z}=$};

\node at (1.5,0.53) {\small $\overline{x_{1}}$};

\node at (2.3,0.53) {\small $\overline{x_{2}}$};

\node at (3.1,0.5) {\small $\cdots$};

\node at (3.9,0.5) {\small $x_{i}$};

\node at (4.7,0.5) {\small $\cdots$};

\node at (5.5,0.53) {\small $\overline{x_{p}}$};

\node at (6.3,0.5) {\small $x_{p}$};

\node at (7.1,0.5) {\small $\cdots$};

\node at (7.9,0.5) {\small $x_{n-2}$};

\node at (8.7,0.53) {\small $\overline{x_{n}}$};

\node at (9.5,0.53) {\small $\overline{x_{n}}$};

\node at (10.3,0.5) {\small $\cdots$};

\node at (11.1,0.53) {\small $\overline{x_{n}}$};

\node at (3.9,1) {\small $\neq $};

\end{tikzpicture}
\caption{For \textit{Case 1'} in the second stage, we substitute $w_{e}$ with $\overline{x_{e}}$ 
for all $e\in ([2,p]\cup\{ n \})\setminus\{ i+1 \}$ from $\mathbf{w}$ to obtain $\mathbf{z}$.}
\label{M.f6.3}
\end{figure}

\textit{Case 2':} 
$r(\mathbf{x}_{[p,n]})>1$. Then there exists $i\in [p,n-1]$ such that $x_{i}\neq x_{i+1}$. 
As shown in Fig. \ref{M.f6.2} and Fig. \ref{M.f6.4}, we perform substitutions on the symbols 
of $\mathbf{w}$ by substituting $w_{e}$ with $\overline{x_{e}}$ for all $e\in [2,p]$. 
This yields the final sequence $\mathbf{z}$. 

\begin{figure}[ht]
\centering
\begin{tikzpicture}
\node at (0,2.5) {\small Position:};

\node at (1.5,2.5) {\small $1$};

\node at (2.3,2.5) {\small $2$};

\node at (3.1,2.5) {\small $\cdots$};

\node at (3.9,2.5) {\small $p$};

\node at (4.7,2.52) {\small $p+1$};

\node at (5.5,2.5) {\small $\cdots$};

\node at (6.3,2.51) {\small $i+1$};

\node at (7.1,2.5) {\small $\cdots$};

\node at (7.9,2.52) {\small $n-1$};

\node at (8.7,2.51) {\small $n$};

\node at (9.5,2.52) {\small $n+1$};

\node at (10.3,2.5) {\small $\cdots$};

\node at (11.1,2.52) {\small $n+t$};


\node at (0.5,1.5) {$\mathbf{x}=$};

\node at (1.5,1.5) {\small $x_{1}$};

\node at (2.3,1.5) {\small $x_{2}$};

\node at (3.1,1.5) {\small $\cdots$};

\node at (3.9,1.5) {\small $x_{p}$};

\node at (4.7,1.5) {\small $x_{p+1}$};

\node at (5.5,1.5) {\small $\cdots$};

\node at (6.3,1.5) {\small $x_{i+1}$};

\node at (7.1,1.5) {\small $\cdots$};

\node at (7.9,1.5) {\small $x_{n-1}$};

\node at (8.7,1.5) {\small $x_{n}$};


\node at (0.5,0.5) {$\mathbf{z}=$};

\node at (1.5,0.53) {\small $\overline{x_{1}}$};

\node at (2.3,0.53) {\small $\overline{x_{2}}$};

\node at (3.1,0.5) {\small $\cdots$};

\node at (3.9,0.53) {\small $\overline{x_{p}}$};

\node at (4.7,0.5) {\small $x_{p}$};

\node at (5.5,0.5) {\small $\cdots$};

\node at (6.3,0.5) {\small $x_{i}$};

\node at (7.1,0.5) {\small $\cdots$};

\node at (7.9,0.5) {\small $x_{n-2}$};

\node at (8.7,0.5) {\small $x_{n-1}$};

\node at (9.5,0.53) {\small $\overline{x_{n}}$};

\node at (10.3,0.5) {\small $\cdots$};

\node at (11.1,0.53) {\small $\overline{x_{n}}$};

\node at (6.3,1) {\small $\neq$};

\end{tikzpicture}
\caption{For \textit{Case 2'} in the second stage, we substitute $w_{e}$ with $\overline{x_{e}}$ 
for all $e\in [2,p]$ from $\mathbf{w}$ to obtain $\mathbf{z}$.}
\label{M.f6.4}
\end{figure}

We can directly observe that the defined 
$\mathbf{z}\in\mathcal{B}_{0,0,p-1}(\mathbf{w})\subseteq \mathcal{B}_{t,0,p}(\mathbf{x})$ 
satisfies the requirements. Thus, \textit{Claim 1} is proved. 

\textit{Proof of Claim 2:} On the contrary, 
$\mathbf{z}\in\mathcal{B}_{t,0,p}(\mathbf{x})$ that satisfies the requirements of \textit{Claim 1} 
has a preimage $\mathbf{y}\in\mathcal{B}_{t,0,p}(0^{n})$ under $f$, i.e., $f(\mathbf{y})=\mathbf{z}$. 
Recall the sets $I$, $J$, and $K$ defined in the first part. 
For any $e\in [p]$, if $e\notin I$, then by the definition of $J$, $e\in J$ since 
$|[e-1]\setminus I|\leq p-1$. This indicates $[p]\subseteq I\cup J=K$. 
Thus, for any $e\in [p]$, $k_{e}=e$ and thus $y_{e}=z_{e}\ominus x_{e}\neq 0$ by the requirements 
in \textit{Claim 1}. It follows that $I=\{ i_{1},\ldots,i_{n-p} \}\subseteq [p+1,n+t]$, $J=[p]$, 
and thus $i_{e}=k_{e+p}$ for any $e\in [n-p]$. 
Noting that $y_{i_{e}}=0$ for $e\in [n-p]$, 
$z_{i_{e}}=y_{i_{e}}\oplus x_{e+p}=x_{e+p}$, which implies 
$\mathbf{z}_{I}=\mathbf{x}_{[p+1,n]}$. 
This contradicts to $\mathbf{x}_{[p+1,n]}\notin \mathcal{D}_{t}(\mathbf{z}_{[p+1,n+t]})$. 
Thus, \textit{Claim 2} is proved. 

Combining these results, we conclude that 
$f$ is an injection from $\mathcal{B}_{t,0,p}(0^{n})$ to $\mathcal{B}_{t,0,p}(\mathbf{x})$ 
but not a surjection. This completes the proof through Lemma \ref{M.l2.1}. 
\end{proof}

Next, we return to proving \eqref{M.e3.1} for general $s$. 

\begin{lemma}\label{M.l6.2}
For $\mathbf{x}\in A_{q}^{n}$, if 
\begin{equation*}
(t,s)\neq (0,0)\wedge (s,p)\neq (0,0)\wedge s+p<n\wedge r(\mathbf{x})>1\wedge t>s,
\end{equation*} 
then 
\begin{equation*}
|\mathcal{B}_{t,s,p}(\mathbf{x})|
>\sum_{i=0}^{t+p}\binom{n+t-s}{i}(q-1)^{i}.
\end{equation*}
\end{lemma}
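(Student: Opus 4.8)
The plan is to reduce the general $t>s$ case to the already-settled $s=0$ case, exactly mirroring the way Lemma \ref{M.l5.2} reduced the $t<s$ case to Lemma \ref{M.l5.1}. The governing observation, used several times already in this paper, is that a single substitution can be simulated by one insertion followed by one deletion. Consequently, $s+p$ substitutions may be rewritten as $p$ substitutions together with $s$ insertion--deletion pairs, which yields the containment
\[
\mathcal{B}_{t-s,0,s+p}(\mathbf{x})\subseteq \mathcal{B}_{t,s,p}(\mathbf{x}),
\]
since $(t-s)+s=t$ insertions, $s$ deletions, and $p$ substitutions are precisely the operations defining $\mathcal{B}_{t,s,p}(\mathbf{x})$. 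Both balls consist of sequences of length $n+(t-s)=n+t-s$, so this is a genuine containment of subsets of the same ambient space $A_{q}^{n+t-s}$.

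Next I would verify that the hypotheses transfer correctly under the parameter substitution $(t,s,p)\mapsto (t-s,0,s+p)$. From
\[
(t,s)\neq (0,0)\wedge (s,p)\neq (0,0)\wedge s+p<n\wedge r(\mathbf{x})>1\wedge t>s,
\]
one obtains $t-s>0$ (so $(t-s,0)\neq (0,0)$), $s+p\geq 1$ (so $(0,s+p)\neq (0,0)$), $0+(s+p)=s+p<n$, $r(\mathbf{x})>1$, and $t-s>0$ with deletion parameter equal to $0$. These are exactly the premises of Lemma \ref{M.l6.1} applied to the ball $\mathcal{B}_{t-s,0,s+p}(\mathbf{x})$. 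Invoking that lemma, and using $(t-s)+(s+p)=t+p$ together with $n+(t-s)=n+t-s$, I would conclude
\[
|\mathcal{B}_{t,s,p}(\mathbf{x})|\geq |\mathcal{B}_{t-s,0,s+p}(\mathbf{x})|>\sum_{i=0}^{t+p}\binom{n+t-s}{i}(q-1)^{i},
\]
which is the desired strict inequality.

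Since all the genuine combinatorial content has already been carried out in Lemma \ref{M.l6.1}, via the injective-but-not-surjective mapping $f$ and the explicit witness $\mathbf{z}$ admitting no preimage, this final lemma is essentially bookkeeping. The only points requiring care are confirming the containment goes in the correct direction and checking that each clause of the hypothesis of Lemma \ref{M.l6.1} is genuinely implied; in particular that $s+p<n$ supplies the condition $p'<n$ needed there (with $p'=s+p$), and that $(s,p)\neq(0,0)$ rules out the degenerate substitution count $s+p=0$. I do not anticipate any substantive obstacle beyond this verification, so the proof should be short.
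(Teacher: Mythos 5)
Your proposal is correct and follows essentially the same route as the paper's own proof: both reduce the claim to Lemma \ref{M.l6.1} via the containment $\mathcal{B}_{t-s,0,s+p}(\mathbf{x})\subseteq \mathcal{B}_{t,s,p}(\mathbf{x})$ and the parameter substitution $(t,s,p)\mapsto(t-s,0,s+p)$, checking that each hypothesis transfers. The only difference is cosmetic: the paper asserts the inequality $|\mathcal{B}_{t,s,p}(\mathbf{x})|\geq|\mathcal{B}_{t-s,0,s+p}(\mathbf{x})|$ without comment, whereas you justify the underlying containment by simulating $s$ of the substitutions with insertion--deletion pairs, which is a routine but welcome elaboration.
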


\begin{proof}
Noting that 
\begin{equation*}
(t,s)\neq (0,0)\wedge (s,p)\neq (0,0)\wedge s+p<n\wedge r(\mathbf{x})>1\wedge t>s
\end{equation*} 
implies
\begin{equation*}
(t-s,0)\neq (0,0)\wedge (0,s+p)\neq (0,0)\wedge 0+(s+p)<n\wedge r(\mathbf{x})>1
\wedge t-s>0,
\end{equation*} 
we can employ Lemma \ref{M.l6.1} for $|\mathcal{B}_{t-s,0,s+p}(\mathbf{x})|$ 
and thus 
\begin{equation*}
|\mathcal{B}_{t,s,p}(\mathbf{x})|
\geq|\mathcal{B}_{t-s,0,s+p}(\mathbf{x})|
>\sum_{i=0}^{t+p}\binom{n+t-s}{i}(q-1)^{i},
\end{equation*}
which completes the proof. 
\end{proof}





\end{document}